\newtheorem{theorem}{Theorem}[section]
\newtheorem{lemma}{Lemma}[section]
\newtheorem{corollary}{Corollary}[section]
\theoremstyle{definition}
\newtheorem{remark}{Remark}[section]
\numberwithin{equation}{section}
\newcommand\blfootnote[1]{\begingroup\renewcommand\thefootnote{}\footnote{#1}\addtocounter{footnote}{-1}\endgroup}
\begin{document}

\title{
{\bf\Large
Pairs of positive periodic solutions of nonlinear ODEs with indefinite weight: a topological degree approach
for the super-sublinear case }\footnote{Work performed under the auspices of the
Grup\-po Na\-zio\-na\-le per l'Anali\-si Ma\-te\-ma\-ti\-ca, la Pro\-ba\-bi\-li\-t\`{a} e le lo\-ro
Appli\-ca\-zio\-ni (GNAMPA) of the Isti\-tu\-to Na\-zio\-na\-le di Al\-ta Ma\-te\-ma\-ti\-ca (INdAM).}}

\author{
{\bf\large Alberto Boscaggin}
\vspace{1mm}\\
{\it\small Department of Mathematics, University of Torino}\\
{\it\small via Carlo Alberto 10}, {\it\small 10123 Torino, Italy}\\
{\it\small e-mail:  alberto.boscaggin@unito.it}\vspace{1mm}\\
\vspace{1mm}\\
{\bf\large Guglielmo Feltrin}
\vspace{1mm}\\
{\it\small SISSA - International School for Advanced Studies}\\
{\it\small via Bonomea 265}, {\it\small 34136 Trieste, Italy}\\
{\it\small e-mail: guglielmo.feltrin@sissa.it}\vspace{1mm}\\
\vspace{1mm}\\
{\bf\large Fabio Zanolin}
\vspace{1mm}\\
{\it\small Department of Mathematics and Computer Science, University of Udine}\\
{\it\small via delle Scienze 206},
{\it\small 33100 Udine, Italy}\\
{\it\small e-mail: fabio.zanolin@uniud.it}\vspace{1mm}}

\date{}

\maketitle

\vspace{-2mm}

\begin{abstract}
\noindent
We study the periodic and the Neumann boundary value problems associated with the second order nonlinear differential equation
\begin{equation*}
u'' + c u' + \lambda a(t) g(u) = 0,
\end{equation*}
where $g \colon \mathopen{[}0,+\infty\mathclose{[}\to \mathopen{[}0,+\infty\mathclose{[}$ is a sublinear function at infinity
having superlinear growth at zero. We prove the existence of two positive solutions when $\int_{0}^{T} a(t)~\!dt < 0$
and $\lambda > 0$ is sufficiently large. Our approach is based on Mawhin's coincidence degree theory and index computations.
\blfootnote{\textit{2010 Mathematics Subject Classification:} 34B18, 34B15, 34C25, 47H11.}
\blfootnote{\textit{Keywords:} boundary value problems, positive solutions, indefinite weight, multiplicity results, coincidence degree.}
\end{abstract}

\section{Introduction}\label{section-1}

This paper deals with the periodic boundary value problem associated with the nonlinear second order
ordinary differential equation
\begin{equation}\label{eq-1.1}
u'' + c u' + \lambda a(t) g(u) = 0.
\end{equation}
Let ${\mathbb{R}}^{+}:= \mathopen{[}0,+\infty\mathclose{[}$ denote the set of non-negative real numbers. We suppose that
$a \colon {\mathbb{R}}\to {\mathbb{R}}$ is a locally integrable $T$-periodic function and
$g \colon {\mathbb{R}}^{+} \to {\mathbb{R}}^{+}$ is continuous and such that
\begin{equation*}
g(0) = 0, \qquad g(s) > 0 \quad \text{for } \; s > 0.
\leqno{(g_{*})}
\end{equation*}
The real constant $c$ is arbitrary and results will be given depending on the parameter $\lambda>0$.

We are interested in the search of \textit{positive and $T$-periodic} solutions to \eqref{eq-1.1}, namely we look for $u(t)$
satisfying \eqref{eq-1.1} in the Carath\'{e}odory sense (see \cite{Ha-80}) and such that $u(t+T)=u(t)>0$ for all $t\in {\mathbb{R}}$.

As main assumptions on the nonlinearity we require that $g(s)$ tends to zero for $s\to 0^{+}$ faster than linearly and it has a
sublinear growth at infinity, that is
\begin{equation*}
\lim_{s\to 0^{+}} \dfrac{g(s)}{s} = 0
\leqno{(g_{0})}
\end{equation*}
and
\begin{equation*}
\lim_{s\to +\infty} \dfrac{g(s)}{s} = 0.
\leqno{(g_{\infty})}
\end{equation*}
Under the above hypotheses, the search of positive solutions of \eqref{eq-1.1} satisfying the two-point boundary condition
$u(0) = u(T) = 0$ has been widely studied. Note that in this case its is not restrictive to suppose $c=0$, since one can
always reduce the problem to this situation via a standard change of variables.
Typical theorems guarantee the existence of at least two (positive) solutions
when $a(t)\geq 0$ for all $t$ and $\lambda > 0$ is sufficiently large (cf.~\cite{ErHuWa-94}).
These proofs have been obtained by different techniques,
such as the theory of fixed points for positive operators or critical point theory.
Under additional technical assumptions similar results can be given for the Dirichlet problem
\begin{equation*}
\begin{cases}
\, -\Delta \,u = \lambda \, a(x) \, g(u) & \text{ in } \Omega \\
\, u = 0 & \text{ on } \partial\Omega
\end{cases}
\end{equation*}
as well (see, for instance, \cite{Am-72, Li-82, Ra-7374}).
In the recent paper \cite{BoZa-13} a dynamical system approach has been proposed in order to obtain pairs of positive solutions,
also when $a(t)$ is allowed to change its sign.

Concerning the periodic boundary value problem, analogous results on pairs of positive solutions have
been provided in \cite{GrKoWa-08} for equations of the form
\begin{equation*}
u'' - k u + \lambda a(t)g(u) = 0,
\end{equation*}
with $k > 0$. However, less results seem to be available when $k=0$. One of the peculiar aspects of the periodic BVP
associated with \eqref{eq-1.1} is the fact that the differential operator has a nontrivial kernel (which is made by the constant functions).
A second feature to take into account concerns the fact that we have to impose additional conditions on the weight function.
Indeed, if $u(t) > 0$ is a $T$-periodic solution of \eqref{eq-1.1}, then (after integrating the equation on $\mathopen{[}0,T\mathclose{]}$) one has
that $\int_{0}^{T} a(t) g(u(t))~\!dt =0$, with $g(u(t)) > 0$ for every $t$. Hence $a(t)$ cannot be of constant sign.
These two facts make it unclear how to apply the methods based on the theory of positive operators for cones in Banach spaces.

A first contribution in the periodic problem for \eqref{eq-1.1} was obtained in \cite{BoZa-12} in the case $c=0$. More precisely,
taking advantage of the variational (Hamiltonian) structure of the equation
\begin{equation}\label{eq-1.2}
u'' + \lambda a(t) g(u) = 0,
\end{equation}
critical point theory for the action functional
\begin{equation*}
J_{\lambda}(u) := \int_{0}^{T} \biggl{[}\dfrac{1}{2} (u')^{2} - \lambda a(t) G(u)\biggr{]}~\!dt
\end{equation*}
was used to prove the existence of at least two positive $T$-periodic solutions for \eqref{eq-1.2}, with $\lambda$
positive and large, by assuming $a^{+}\not\equiv 0$ on some interval and
\begin{equation*}
\int_{0}^{T} a(t)~\!dt < 0.
\leqno{(a_{*})}
\end{equation*}
Roughly speaking, condition $(a_{*})$ guarantees both that the functional $J_{\lambda}$ is coercive and bounded from below and
that the origin is a strict local minimum. When $\lambda > 0$ is sufficiently large (so that $\inf J_{\lambda} < 0$) one gets two
nontrivial critical points: a global minimum and a second one from a mountain pass geometry. To perform the technical estimates, in
\cite{BoZa-12} some further conditions on $g(s)$ and $G(s) := \int_{0}^{s} g(\xi)~\!d\xi$ (implying $(g_{0})$ and
$(g_{\infty})$) were imposed. For example, the superlinearity assumption at zero is expressed by
\begin{equation*}
\lim_{s\to 0^{+}} \dfrac{g(s)}{s^{\alpha}} = \ell_{\alpha} > 0,
\leqno{(g_{\alpha})}
\end{equation*}
for some $\alpha > 1$. Notice that assumptions of this kind have been used also in previous works dealing with indefinite
superlinear problems, like \cite{AlTa-93, BeCaDoNi-94}.

As observed in \cite{BoZa-12} (and first also in \cite{BaPoTe-88}, in the context of the Neumann BVP), condition $(a_{*})$
becomes necessary when $g(s)$ is continuously differentiable with $g'(s) > 0$ for all $s > 0$. Repeating the same argument
as in \cite[Proposition~2.1]{BoZa-12}, one can check that the same necessary condition is valid for \eqref{eq-1.1} with
an arbitrary $c\in{\mathbb{R}}$.

Unlike the case of the two-point (Dirichlet) boundary value problem, where it is easy to enter in a variational formulation
of Sturm-Liouville type for an arbitrary $c\in\mathbb{R}$, for the periodic problem this is no more guaranteed.
Indeed, for $c\neq0$, we lose the Hamiltonian structure if we pass to the natural equivalent system in the phase-plane
\begin{equation*}
u' = y, \qquad y' = - cy - \lambda a(t) g(u).
\end{equation*}
On the other hand, we can consider an equivalent first order system of Hamiltonian type, as
\begin{equation*}
u' = e^{-ct} y, \qquad y' = - \lambda e^{ct} a(t) g(u),
\end{equation*}
but its $T$-periodic solutions do not correspond to the $T$-periodic solutions of \eqref{eq-1.1}.

\medskip

The main contribution of the present paper is to provide an existence result for pairs of positive $T$-periodic solutions
to equation \eqref{eq-1.1} in the possibly non-variational setting (when $c\neq0$).
To this aim, we introduce a topological approach which may have some independent interest, even for the case $c=0$.
Our proof is reminiscent of the classical approach in the case of positive operators in ordered Banach spaces which
consists in proving that the fixed point index of the associated operator is $1$ on small balls $B(0,r)$
as well as on large balls $B(0,R)$.
Moreover, when $\lambda > 0$ is sufficiently large, one can find an intermediate ball $B(0,\rho)$ (with
$r < \rho < R$) where the fixed point index is $0$. In this manner,
there is a nontrivial (positive) solution
in $P \cap (B(0,\rho)\setminus B[0,r])$ and another one in $P \cap (B(0,R)\setminus B[0,\rho])$, where
$P$ is the positive cone.
In our setting we do not have a positive operator, but, using a maximum principle type argument,
we can work directly with the topological degree in the Banach space of continuous $T$-periodic functions
and then prove that the two nontrivial solutions that we reach are indeed positive. Actually, the situation is even more
complicated because equation \eqref{eq-1.1} is a \textit{coincidence equation} of the form
\begin{equation*}
Lu = N_{\lambda}u,
\end{equation*}
with $L$ a non-invertible differential operator. In this case Mawhin's coincidence degree theory (see \cite{Ma-79}), adapted to the case of
locally compact operators (cf.~\cite{Nu-93}), is the appropriate tool for our purposes.
In the recent paper \cite{FeZa-14} a similar approach has been adopted for the study of positive solutions when the nonlinearity
is superlinear both at zero and at infinity. In such a situation the existence of at least one positive solution is guaranteed.

The advantage of using an approach based on degree theory lies also on the fact that the existence results are stable
with respect to small perturbations of the differential equation. Hence, we can provide pairs of positive $T$-periodic
solutions also for equations of the form
\begin{equation*}
u'' + c u' + \varepsilon u + \lambda a(t) g(u) = 0,
\end{equation*}
for $\varepsilon$ small. This gives an interesting result also in the variational case (when $c=0$).

The technical assumptions on $g(s)$ that we have to impose at zero (as well as at infinity) allow to
slightly improve $(g_{\alpha})$, by using a condition of \textit{regular oscillation} type.
Let ${\mathbb{R}}^{+}_{0}:= \mathopen{]}0,+\infty\mathclose{[}$ and let $h \colon {\mathbb{R}}^{+}_{0} \to {\mathbb{R}}^{+}_{0}$ be a continuous function.
We say that $h$ is \textit{regularly oscillating at zero} if
\begin{equation*}
\lim_{\substack{s\to0^{+} \\ \omega\to1}}\dfrac{h(\omega s)}{h(s)}=1.
\end{equation*}
Analogously, we say that $h$ is \textit{regularly oscillating at infinity} if
\begin{equation*}
\lim_{\substack{s\to+\infty \\ \omega\to1}}\dfrac{h(\omega s)}{h(s)}=1.
\end{equation*}
The concept of regularly oscillating function (usually referred to the case at infinity) is related to
classical conditions of Karamata type which have been developed and studied by several authors for their
significance in different areas of real analysis and probability (cf.~\cite{BiGoTe-87, Se-76}). For the specific definition
considered in our paper as well as for some historical remarks, see \cite{DjTo-01} and the references therein.
Observe that any function $h(s)$ such that $h(s) \sim K s^{p}$, with $K, p > 0$, is regularly oscillating both
at zero and at infinity. However, the class of regularly oscillating functions is quite broad. For instance,
functions like
\begin{equation*}
h(s) = s^{p} \exp\biggl{(}\int_{1}^{s} \dfrac{b(t)}{t}~\!dt\biggr{)},
\end{equation*}
with $b(t)$ continuous and bounded, are regularly oscillating at infinity.

\medskip

Now we are in position to state our main result.

\begin{theorem}\label{th-1.1}
Let $g \colon {\mathbb{R}}^{+} \to {\mathbb{R}}^{+}$ be a continuous function satisfying $(g_{*})$.
Suppose also that $g$ is regularly oscillating at zero and at infinity and satisfies $(g_{0})$ and $(g_{\infty})$.
Let $a \colon \mathbb{R} \to \mathbb{R}$ be a locally integrable $T$-periodic function satisfying the average
condition $(a_{*})$.
Furthermore, suppose that there exists an interval $I \subseteq \mathopen{[}0,T\mathclose{]}$ such that
$a(t) \geq 0$ for a.e.~$t\in I$ and $\int_{I} a(t)~\!dt > 0$.
Then there exists $\lambda^{*}>0$ such that for each $\lambda > \lambda^{*}$ equation \eqref{eq-1.1} has at least two
positive $T$-periodic solutions.
\end{theorem}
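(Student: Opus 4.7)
The plan is to recast \eqref{eq-1.1} as a coincidence equation $Lu = N_\lambda u$ in the Banach space $X = C_T$ of continuous $T$-periodic functions. Here $Lu = u'' + cu'$, with periodic boundary conditions on a $W^{2,1}$ domain, is Fredholm of index zero with $\ker L$ the space of constants; $N_\lambda$ is the Nemytskii operator associated to $-\lambda a(t)\tilde g(u)$, where $\tilde g$ is a continuous extension of $g$ to all of $\mathbb{R}$ (for instance, $\tilde g(s) = g(s^{+})$). Mawhin's coincidence degree, in the locally compact variant of \cite{Nu-93}, is the tool of choice. A preliminary maximum-principle argument, exploiting the nonnegativity of $\tilde g$ together with the structure of $L$ and with the integral identity obtained upon integrating \eqref{eq-1.1} over $[0,T]$, shows that any nontrivial coincidence $Lu = N_\lambda u$ is automatically strictly positive on $[0,T]$; hence it is enough to exhibit two nontrivial zeros of $L - N_\lambda$.

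\textbf{Three-radii scheme.} Following the classical super-sublinear template, the goal is to produce radii $0 < r < \rho < R$ and a threshold $\lambda^{*}>0$ such that, for every $\lambda > \lambda^{*}$,
\begin{equation*}
D_{L}(L-N_\lambda, B(0,r)) = D_{L}(L-N_\lambda, B(0,R)) = 1, \qquad D_{L}(L-N_\lambda, B(0,\rho)) = 0.
\end{equation*}
Additivity then yields a nontrivial coincidence in each of the annuli $B(0,\rho)\setminus \overline{B}(0,r)$ and $B(0,R)\setminus\overline{B}(0,\rho)$, i.e.\ the two positive $T$-periodic solutions. The small- and large-ball degrees I would compute via a linear homotopy to the averaging projection $Qu := T^{-1}\int_{0}^{T} u$ onto $\ker L$. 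On $\partial B(0,r)$, admissibility of the homotopy rests on $(g_{0})$ together with regular oscillation at zero, which supplies the uniform estimate $\tilde g(u(t)) = o(\|u\|_{\infty})$ as $\|u\|_{\infty} \to 0$; the surviving Brouwer-degree calculation on the one-dimensional $\ker L$ then yields $1$, with the sign determined by $(a_{*})$. The large-ball case proceeds symmetrically, using $(g_{\infty})$ and regular oscillation at infinity to produce an \emph{a priori} bound on $\|u\|_{\infty}$ along the homotopy.

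\textbf{Intermediate ball and main obstacle.} The decisive and most delicate step is the vanishing of the degree on $B(0,\rho)$, and it is here that the largeness of $\lambda$ enters. I would employ a non-existence homotopy of the form $Lu = N_\lambda u + \alpha v$, where $v$ is a fixed nonnegative $T$-periodic function supported in the interval $I$ where $a\geq 0$ (e.g.\ $v = \chi_{I}$) and $\alpha$ runs over $[0,\alpha_{0}]$. Two a priori facts are needed: absence of coincidences of norm exactly $\rho$ along the entire homotopy, and absence of any coincidence at all for $\alpha = \alpha_{0}$ large. The second is immediate from integrating the $\alpha$-equation over $[0,T]$: the left-hand side has zero mean while the perturbation contributes $\alpha_{0}\int_{0}^{T} v > 0$, and $(a_{*})$ prevents the Nemytskii term from compensating. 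The first, uniform-in-$\alpha$ bound is the technical core. For a putative positive $T$-periodic coincidence $u$ of norm $\rho$, a Gronwall-type estimate on the first-order operator underlying $L$ yields a lower bound of the form $\min_{I} u \geq \kappa(\rho)\|u\|_{\infty}$; integrating the $\alpha$-equation on $I$ and using $a\geq 0$, $\int_{I} a > 0$, and $g(\kappa\rho) > 0$, the right-hand side grows linearly in $\lambda$, while the left-hand side is dominated by a constant depending only on $\rho$ and $c$. Choosing $\lambda > \lambda^{*}$ large enough produces the contradiction. Executing this quantitative comparison uniformly in $\alpha$, and with the non-self-adjoint operator $L$ arising when $c \neq 0$, is the main obstacle of the whole argument; the regular-oscillation hypotheses on $g$ serve precisely to make the small- and large-ball estimates robust under the weak one-sided asymptotic conditions $(g_{0})$ and $(g_{\infty})$, which is what allows the topological approach to reach further than the variational one of \cite{BoZa-12}.
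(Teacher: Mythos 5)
Your scheme breaks down at the decisive step, the vanishing of the degree on the intermediate ball $B(0,\rho)$. The admissibility of your homotopy $Lu = N_\lambda u + \alpha v$ on $\partial B(0,\rho)$ rests on the claimed Harnack-type bound $\min_{I} u \geq \kappa(\rho)\,\|u\|_{\infty}$ for a nonnegative solution with $\|u\|_{\infty}=\rho$, and no such bound exists. The convexity/Gronwall mechanism (i.e. $(e^{ct}u')' = -e^{ct}\bigl(\lambda a(t)g(u)+\alpha v\bigr) \leq 0$) is available only \emph{on} $I$, where $a \geq 0$ and the forcing is nonnegative; outside $I$ the weight may be negative and a solution can be as large as $\rho$ there while being arbitrarily small on all of $I$. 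For such a boundary solution $\max_I u$, hence $\min_I u$, can be tiny, so $g(u)$ has no positive lower bound on $I$ and your contradiction ``the integral over $I$ grows linearly in $\lambda$ since $g(\kappa\rho)>0$'' evaporates: no $\lambda^{*}$ depending only on $\rho$, $c$, $g$ and $a|_{I}$ excludes these solutions. This is precisely why the paper does not use a middle ball at all: it replaces $B(0,\rho)$ by the \emph{unbounded} open set $\Omega_{\rho,I}=\{u\in X:\max_{t\in I}|u(t)|<\rho\}$, whose ``boundary condition'' $\max_I u=\rho$ is the one reachable by the estimate of Section~\ref{section-3.1} ($\min_{[\sigma_0,\tau_0]}u\geq\delta\rho$ with $\delta$ independent of $\lambda,\alpha$). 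That choice has a price your proposal does not pay: one must extend the coincidence degree to open, possibly unbounded sets (compactness of the solution sets, furnished by the Gronwall bound in condition $(B_{\rho,I})$ of Lemma~\ref{lem-2.1-deg0}, which also supplies the radius $R$), and the additivity is then performed on $\Omega_{\rho,I}\setminus B[0,r]$ and $B(0,R)\setminus\text{cl}\,(\Omega_{\rho,I}\cap B(0,R_0))$. Note in particular that the ``small'' solution is localized only by $\max_I u<\rho$, not by $\|u\|_{\infty}<\rho$, so the classical three-concentric-balls picture you invoke genuinely fails in this indefinite setting.

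A secondary but real flaw is the truncation $\tilde g(s)=g(s^{+})$: with it every negative constant $u\equiv -k$ satisfies $Lu=0=N_\lambda u$, so the solution sets are never compact, the degree on $B(0,r)$ is not even defined (the point $u\equiv -r$ is a coincidence lying on the boundary), and ``every nontrivial coincidence is strictly positive'' is false. The paper's truncation $f_\lambda(t,s)=-s$ for $s\leq 0$ is designed exactly to rule this out; the fix is easy but necessary. Apart from these points, your small- and large-ball computations (homotopy $Lu=\vartheta N_\lambda u$ reducing to the Brouwer degree of the averaged map on $\ker L$, with the sign coming from $(a_*)$ and admissibility from regular oscillation after normalizing $v_n=u_n/\|u_n\|_\infty\to 1$) do coincide with the paper's Lemma~\ref{lem-2.2-deg1}.
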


As will become clear from the proof, the constant $\lambda^{*}$ can be chosen depending (besides on $c$ and $g(s)$) only on
the behavior of $a(t)$ on the interval $I$. This remark allows to obtain the following corollary
for the related two-parameter equation
\begin{equation}\label{eq-1.4}
u'' + c u' + (\lambda a^{+}(t) - \mu a^{-}(t))g(u) = 0,
\end{equation}
with $\lambda, \mu > 0$, where, as usual, we have set
\begin{equation*}
a^{+}(t):= \dfrac{a(t)+|a(t)|}{2},  \qquad  a^{-}(t):= \dfrac{-a(t)+|a(t)|}{2}.
\end{equation*}
Equation \eqref{eq-1.4}, for $c=0$, has been considered in \cite{BoZa-12b}, with the aim of
investigating multiplicity results and complex dynamics when $\mu \gg 0$ (see also \cite{FeZa-PP} and the references
therein for related results in the superlinear case).

\begin{corollary}\label{cor-1.1}
Let $g(s)$ be as above and let $a(t)$ be a $T$-periodic function with $a^{\pm}\in L^{1}(\mathopen{[}0,T\mathclose{]})$
and $a^-\not\equiv 0$.
Suppose also that there exists an interval $I\subseteq \mathopen{[}0,T\mathclose{]}$ such that
\begin{equation*}
\int_{I} a^{-}(t)~\!dt = 0 < \int_{I} a^{+}(t)~\!dt.
\end{equation*}
Then there exists $\lambda^{*} >0$ such that for each $\lambda > \lambda^{*}$ and for each
\begin{equation*}
\mu > \lambda \, \dfrac{\int_{0}^{T} a^{+}(t)~\!dt}{\int_{0}^{T} a^{-}(t)~\!dt}
\end{equation*}
equation \eqref{eq-1.4} has at least two positive $T$-periodic solutions.
\end{corollary}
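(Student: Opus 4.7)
The plan is to reduce Corollary~\ref{cor-1.1} to Theorem~\ref{th-1.1} by an algebraic rewriting of the weight. For fixed admissible $\lambda > 0$ and $\mu > 0$, I factor $\lambda$ out of the coefficient of $g(u)$ in \eqref{eq-1.4} to bring it into the one-parameter form \eqref{eq-1.1}, namely
\begin{equation*}
u'' + c u' + \lambda \, \tilde{a}_{\mu}(t) \, g(u) = 0, \qquad \tilde{a}_{\mu}(t) := a^{+}(t) - \frac{\mu}{\lambda}\, a^{-}(t).
\end{equation*}
It then suffices to check that $\tilde{a}_{\mu}$ satisfies every hypothesis on the weight required by Theorem~\ref{th-1.1}.

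Local integrability and $T$-periodicity of $\tilde{a}_{\mu}$ follow at once from the corresponding properties of $a^{\pm}$. The mean condition $(a_{*})$ for $\tilde{a}_{\mu}$ reads
\begin{equation*}
\int_{0}^{T} \tilde{a}_{\mu}(t)\, dt = \int_{0}^{T} a^{+}(t)\, dt - \frac{\mu}{\lambda}\int_{0}^{T} a^{-}(t)\, dt < 0,
\end{equation*}
and this inequality is precisely the lower bound on $\mu$ imposed in the statement. For the interval $I$: from $a^{-} \geq 0$ a.e.\ together with $\int_{I} a^{-}(t)\, dt = 0$ I deduce $a^{-} = 0$ a.e.\ on $I$; therefore $\tilde{a}_{\mu}(t) = a^{+}(t) \geq 0$ for a.e.\ $t \in I$, and $\int_{I} \tilde{a}_{\mu}(t)\, dt = \int_{I} a^{+}(t)\, dt > 0$, as required.

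The essential point, which makes the argument work uniformly in $\mu$, is the remark immediately following Theorem~\ref{th-1.1}: the constant $\lambda^{*}$ produced there depends only on $c$, on $g$, and on the behavior of the weight on $I$. Since by construction $\tilde{a}_{\mu}\rvert_{I} = a^{+}\rvert_{I}$ does \emph{not} depend on $\mu$, one and the same threshold $\lambda^{*}$ serves every admissible pair $(\lambda,\mu)$, and an application of Theorem~\ref{th-1.1} to the weight $\tilde{a}_{\mu}$ yields the two positive $T$-periodic solutions claimed. The only step requiring genuine care is therefore not the algebra but the invocation of this uniform-$\lambda^{*}$ property; everything else reduces to the straightforward verifications above.
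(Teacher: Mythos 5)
Your proposal is correct and follows essentially the same route as the paper: both arguments rest on the observation that $\lambda^{*}$ depends only on the weight's behavior on $I$ (where it equals $a^{+}$, since $\int_{I}a^{-}=0$ forces $a^{-}=0$ a.e.\ on $I$), while the negativity of the mean translates exactly into the stated lower bound on $\mu$. The only cosmetic difference is that you factor $\lambda$ out to apply Theorem~\ref{th-1.1} to $\tilde{a}_{\mu}=a^{+}-(\mu/\lambda)a^{-}$ as a black box, whereas the paper tracks the two-parameter weight $\lambda a^{+}-\mu a^{-}$ directly through the lemmas; the substance is identical.
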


Our results are sharp in the sense that there are examples of functions $g(s)$
satisfying all the assumptions of Theorem~\ref{th-1.1} or of Corollary~\ref{cor-1.1} and such that
there are no positive $T$-periodic solutions if $\lambda > 0$ is small or if $(a_{*})$ is not satisfied.
For this remark see \cite[Section~2]{BoZa-12}, where the assertions were proved in the case $c=0$.
One can easily check that those results can be extended to the case of an arbitrary $c\in {\mathbb{R}}$
(see also Section~\ref{section-4.3}).

Another sharp result can be given when $g(s)$ is smooth. Indeed, first of all we produce a variant
of Theorem~\ref{th-1.1} by replacing the hypothesis of regular oscillation of $g$ at zero
or at infinity with the condition of continuous differentiability of $g(s)$ in a neighborhood of $s=0$ or,
respectively, near infinity (see Theorem~\ref{th-4.1}).
Next, in the smooth case and further assuming that $|g'(s)|$ is bounded on $\mathbb{R}^{+}_{0}$, we can also provide a nonexistence result
for $\lambda > 0$ small (see Theorem~\ref{th-4.2}). As a consequence of these results, the following variant of Theorem~\ref{th-1.1} can be stated.
We denote by $g'(\infty) = \lim_{s\to+\infty} g'(s)$.

\begin{theorem}\label{th-1.2}
Let $g \colon {\mathbb{R}}^{+} \to {\mathbb{R}}^{+}$ be a continuously differentiable function satisfying $(g_{*})$
and such that $g'(0) = 0$ and $g'(\infty) = 0$.
Let $a \colon \mathbb{R} \to \mathbb{R}$ be a locally integrable $T$-periodic function satisfying the average
condition $(a_{*})$.
Furthermore, suppose that there exists an interval $I \subseteq \mathopen{[}0,T\mathclose{]}$ such that
$a(t) \geq 0$ for a.e.~$t\in I$ and $\int_{I} a(t)~\!dt > 0$.
Then there exists $\lambda_{*} >0$ such that for each $0 < \lambda < \lambda_{*}$
equation \eqref{eq-1.1} has no positive $T$-periodic solution. Moreover, there exists
$\lambda^{*} >0$ such that for each $\lambda > \lambda^{*}$ equation \eqref{eq-1.1} has at least two positive $T$-periodic solutions.
Condition $(a_{*})$ is also necessary if $g'(s) > 0$ for $s > 0$.
\end{theorem}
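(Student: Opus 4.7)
The plan is to obtain Theorem~\ref{th-1.2} by combining three ingredients: a $C^{1}$-variant of Theorem~\ref{th-1.1} (to be stated as Theorem~\ref{th-4.1}) for the multiplicity part, a nonexistence result for small $\lambda$ (Theorem~\ref{th-4.2}), and a classical integral identity for the necessity of $(a_{*})$.

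First, observe that the smoothness hypotheses subsume $(g_{0})$ and $(g_{\infty})$: since $g\in C^{1}(\mathbb{R}^{+})$ with $g(0)=0$, the definition of derivative gives $g(s)/s\to g'(0)=0$ as $s\to 0^{+}$; and for every $\varepsilon>0$ one can pick $s_{0}$ with $|g'|<\varepsilon$ on $(s_{0},+\infty)$, so $g(s)/s \leq g(s_{0})/s+\varepsilon$ for $s>s_{0}$, whence $(g_{\infty})$. Thus $g$ meets all hypotheses of Theorem~\ref{th-1.1} \emph{except} regular oscillation. My plan for the multiplicity part is to mimic the Mawhin coincidence-degree proof of Theorem~\ref{th-1.1}, replacing regular oscillation by continuous differentiability: regular oscillation is used in the original argument only to compare $g(\omega s)$ with $g(s)$ for $\omega$ near $1$, and in the $C^{1}$ setting the analogous control is furnished by the mean value estimate $|g(\omega s)-g(s)|\leq|\omega-1|\,s\sup|g'|$, which suffices to push the same index computations through to produce a small ball of index $1$, an intermediate ball of index $0$, and a large ball of index $1$, hence two positive $T$-periodic solutions.

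For the nonexistence statement I would argue by contradiction. Assume sequences $\lambda_{n}\downarrow 0$ and positive $T$-periodic solutions $u_{n}$. Since $g'(0)=g'(\infty)=0$ and $g'$ is continuous on $\mathbb{R}^{+}_{0}$, the derivative $g'$ is bounded on $\mathbb{R}^{+}_{0}$ by some $M$, so $g(u_{n})\leq M u_{n}$ and the equation can be rewritten as $u_{n}''+cu_{n}'+\lambda_{n} q_{n}(t)u_{n}=0$ with $|q_{n}(t)|\leq M|a(t)|$. Standard periodic a priori estimates should then force (along a subsequence) $u_{n}$ to converge uniformly to a positive constant $u_{\infty}$; passing to the limit in the averaged identity $\int_{0}^{T} a(t) g(u_{n}(t))\,dt=0$ obtained from integrating~\eqref{eq-1.1} yields $g(u_{\infty})\int_{0}^{T} a(t)\,dt=0$, which contradicts $(a_{*})$ since $g(u_{\infty})>0$. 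This compactness step, and in particular the exclusion of limit profiles in which $\|u_{n}\|_{\infty}$ tends to $0$ or $+\infty$, is the main technical obstacle.

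Finally, the necessity of $(a_{*})$ when $g'(s)>0$ for $s>0$ follows from the classical Bandle--Pozio--Tesei integral identity (cf.~\cite{BaPoTe-88,BoZa-12}), now generalized to arbitrary $c\in\mathbb{R}$. Dividing~\eqref{eq-1.1} by $g(u(t))>0$ and integrating over $[0,T]$, the convective contribution $c\int_{0}^{T} u'/g(u)\,dt$ vanishes by $T$-periodicity as the increment of an antiderivative of $u'/g(u)$; integration by parts on the remaining term yields
\begin{equation*}
\int_{0}^{T}\frac{g'(u(t))}{g(u(t))^{2}}\,(u'(t))^{2}\,dt + \lambda\int_{0}^{T} a(t)\,dt = 0.
\end{equation*}
Since $g'>0$, the first integrand is non-negative and strictly positive wherever $u'\neq 0$; a positive $T$-periodic solution cannot be constant (otherwise the equation would force $a\equiv 0$, contradicting $\int_{I}a>0$), so the first integral is strictly positive and $\int_{0}^{T} a(t)\,dt<0$, giving $(a_{*})$.
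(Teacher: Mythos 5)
Your reduction of the necessity of $(a_{*})$ (divide by $g(u)$, note the $c$-term integrates to zero by periodicity, integrate by parts) is correct and is exactly the argument the paper points to. The two substantive halves, however, both miss the key device on which the paper's proof rests: the change of variable $z(t)=u'(t)/(\nu g(u(t)))$, which turns \eqref{eq-4.1} into the Riccati-type equation $z'+cz=-\nu g'(u(t))z^{2}-a(t)$ and yields Lemma~\ref{lem-4.1}: there is a constant $\omega_{*}>0$, independent of the interval $J$, such that \eqref{eq-4.1} has no $T$-periodic solution with range in $J$ whenever $\nu\sup_{J}|g'|<\omega_{*}$ (a priori bound $\|z\|_{\infty}\leq M$ plus the identity $\int_{0}^{T}\nu g'(u)z^{2}\,dt=-\int_{0}^{T}a\,dt$). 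Theorem~\ref{th-1.2} is then immediate from Theorem~\ref{th-4.1} (multiplicity: $(H_{d})$ of Lemma~\ref{lem-2.2-deg1} is checked by applying Lemma~\ref{lem-4.1} with $J=\mathopen{]}0,r\mathclose{]}$ and $J=\mathopen{[}R_{n}/2,R_{n}\mathclose{]}$) and Theorem~\ref{th-4.2} (nonexistence for small $\lambda$: Lemma~\ref{lem-4.1} with $J=\mathbb{R}^{+}_{0}$, using that $g'(0)=g'(\infty)=0$ makes $g'$ bounded).

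Without that lemma, both of your replacements have genuine gaps. For the multiplicity part, your claim that the mean value estimate can replace regular oscillation does not go through: the contradiction in Section~\ref{section-3.2} requires $\sup_{t}|g(r_{n}v_{n}(t))/g(r_{n})-1|\to 0$, and the mean value theorem only gives the bound $\|v_{n}-1\|_{\infty}\,r_{n}\sup_{\mathopen{[}0,r_{n}\mathclose{]}}|g'|/g(r_{n})$. The ratio $r\sup_{\mathopen{[}0,r\mathclose{]}}|g'|/g(r)$ is not controlled by $g\in C^{1}$ with $g'(0)=0$ (the value $g(r_{n})$ can be exponentially small compared with $r_{n}\sup_{\mathopen{[}0,r_{n}\mathclose{]}}|g'|$, e.g.\ for a $g$ exhibiting on each scale a small bump followed by an extremely flat stretch; indeed $C^{1}$ with $g'(0)=0$ does not imply regular oscillation at zero), and the available rate $\|v_{n}-1\|_{\infty}=O\bigl(\sqrt{\sup_{t}g(u_{n}(t))/u_{n}(t)}\bigr)$ does not compensate in general; the same obstruction occurs at infinity, where $R_{n}/g(R_{n})\to+\infty$. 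For the nonexistence part, the step you yourself flag as the "main technical obstacle" --- excluding profiles with $\|u_{n}\|_{\infty}\to 0$ or $+\infty$ along $\lambda_{n}\downarrow 0$ --- is precisely the same difficulty and is left unproved; the paper avoids any limiting/compactness argument altogether, since Lemma~\ref{lem-4.1} applied on all of $\mathbb{R}^{+}_{0}$ excludes every positive $T$-periodic solution at once as soon as $\lambda\max|g'|<\omega_{*}$. So, as written, both the existence-of-two-solutions half and the nonexistence half remain open in your proposal; the missing idea is the Riccati substitution and the resulting smallness criterion on $\nu\sup_{J}|g'|$.
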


To show a simple example of applicability of Theorem~\ref{th-1.2}, we consider the $T$-periodic boundary value problem
\begin{equation}\label{eq-1.5}
\begin{cases}
\, u'' + c u' + \lambda(\sin(t) + k) g(u) = 0\\
\, u(2\pi) - u(0) = u'(2\pi) - u'(0) = 0,
\end{cases}
\end{equation}
where $k\in\mathbb{R}$ and
\begin{equation*}
g(s) = \arctan(s^{\alpha}), \quad \text{with }\, \alpha > 1,
\end{equation*}
(other examples of functions $g(s)$ can be easily produced). Since $g'(s) > 0$ for all $s > 0$,
we know that there are positive $T$-periodic solutions only if $-1 < k < 0$.
Moreover, for any fixed $k\in \mathopen{]}-1,0\mathclose{[}$ there exist
two constants $0 < \lambda_{*,k} \leq \lambda^{*,k}$ such that for $0 < \lambda <  \lambda_{*,k}$ there are no positive solutions for
problem \eqref{eq-1.5}, while for $\lambda >  \lambda^{*,k}$ there are at least two positive
solutions. Estimates for $\lambda_{*,k}$ and $\lambda^{*,k}$ can be given for any specific equation.

\medskip

The plan of the paper is the following. In Section~\ref{section-2} we recall some basic facts about Mawhin's coincidence degree
and we present two lemmas for the computation of the degree (see Lemma~\ref{lem-2.1-deg0} and Lemma~\ref{lem-2.2-deg1}).
We end the section by showing the general scheme we follow in the proof of Theorem~\ref{th-1.1}, which is performed in
Section~\ref{section-3}. We present in Section~\ref{section-4} some consequences and variants of the main theorem
(including existence of small/large solutions using only conditions for $g(s)$ near zero/near infinity, respectively).
In the same section we also deal with the smooth case and give a nonexistence result.
Section~\ref{section-5} is devoted to a brief description of how all the results can be
adapted to the Neumann problem, including a final application to radially symmetric solutions on annular domains.

\section{The abstract setting}\label{section-2}

Let $X:=\mathcal{C}_{T}$ be the Banach space of continuous and $T$-periodic functions $u \colon \mathbb{R} \to \mathbb{R}$,
endowed with the norm
\begin{equation*}
\|u\|_{\infty} := \max_{t\in \mathopen{[}0,T\mathclose{]}} |u(t)| = \max_{t\in \mathbb{R}} |u(t)|,
\end{equation*}
and let $Z:=L^{1}_{T}$ be the Banach space of measurable and $T$-periodic functions $v \colon \mathbb{R} \to \mathbb{R}$
which are integrable on $\mathopen{[}0,T\mathclose{]}$, endowed with the norm
\begin{equation*}
\|v\|_{L^{1}_{T}}:= \int_{0}^{T} |v(t)|~\!dt.
\end{equation*}
The linear differential operator
\begin{equation*}
L \colon u \mapsto - u'' - cu'
\end{equation*}
is a (linear) Fredholm map of index zero defined on $\text{\rm dom}\,L := W^{2,1}_{T} \subseteq X$, with range
\begin{equation*}
\text{\rm Im}\,L =  \biggl{\{}v\in Z \colon \int_{0}^{T} v(t)~\!dt = 0 \biggr\}.
\end{equation*}
Associated with $L$ we have the projectors
\begin{equation*}
P \colon X \to \ker L \cong {\mathbb{R}}, \qquad Q \colon Z \to \text{\rm coker}\,L \cong Z/\text{\rm Im}\,L \cong \mathbb{R},
\end{equation*}
that, in our situation, can be chosen as the average operators
\begin{equation*}
Pu = Qu := \dfrac{1}{T}\int_{0}^{T} u(t)~\!dt.
\end{equation*}
Finally, let
\begin{equation*}
K_{P} \colon \text{\rm Im}\,L \to \text{\rm dom}\,L \cap \ker P
\end{equation*}
be the right inverse of $L$, which is the operator that at any function $v\in L^{1}_{T}$ with $\int_{0}^{T} v(t)~\!dt =0$
associates the unique $T$-periodic solution $u$ of
\begin{equation*}
u'' + c u' + v(t) =0, \quad \text{ with }\; \int_{0}^{T} u(t)~\!dt = 0.
\end{equation*}

Next, we define the $L^{1}$-Carath\'{e}odory function
\begin{equation*}
f_{\lambda}(t,s) :=
\begin{cases}
\, -s,  & \text{if } s \leq 0;\\
\, \lambda a(t) g(s), & \text{if } s \geq 0;
\end{cases}
\end{equation*}
where $a \colon \mathbb{R} \to \mathbb{R}$ is a $T$-periodic and locally integrable function, $g \colon {\mathbb{R}}^{+}\to {\mathbb{R}}^{+}$
is a continuous function with $g(0) = 0$ and $\lambda > 0$ is a fixed parameter.
Let us denote by $N_{\lambda} \colon X \to Z$ the Nemytskii operator induced by the function $f_{\lambda}$, that is
\begin{equation*}
(N_{\lambda} u)(t):= f_{\lambda}(t,u(t)), \quad t\in\mathbb{R}.
\end{equation*}
By coincidence degree theory we know that the equation
\begin{equation}\label{eq-coincidence-eq}
Lu = N_{\lambda}u,\quad u\in \text{\rm dom}\,L,
\end{equation}
is equivalent to the fixed point problem
\begin{equation*}
u = \Phi_{\lambda}u:= Pu + QN_{\lambda}u + K_{P}(Id-Q)N_{\lambda}u, \quad u \in X.
\end{equation*}
Technically, the term $QN_{\lambda}u$ in the above formula should be more correctly written as $JQN_{\lambda}u$, where $J$ is a
linear (orientation-preserving) isomorphism from $\text{\rm coker}\,L$ to $\ker L$. However, in our situation, we can take as $J$ the identity on
$\mathbb{R}$, having identified $\text{\rm coker}\,L$, as well as $\ker L$, with $\mathbb{R}$.
It is standard to verify that $\Phi_{\lambda} \colon X \to X$ is a completely continuous operator.
In such a situation, we usually say that $N_{\lambda}$ is \textit{$L$-completely continuous}
(see \cite{Ma-79}, where the treatment has been given for the most general cases).

If ${\mathcal{O}}\subseteq X$ is an open and \textit{bounded} set such that
\begin{equation*}
Lu \neq N_{\lambda}u, \quad \forall \, u\in \partial{\mathcal{O}}\cap \text{\rm dom}\,L,
\end{equation*}
the \textit{coincidence degree} $D_{L}(L-N_{\lambda},{\mathcal{O}})$ (\textit{of $L$ and $N_{\lambda}$ in ${\mathcal{O}}$})
is defined as
\begin{equation*}
D_{L}(L-N_{\lambda},{\mathcal{O}}):= \text{deg}_{LS}(Id - \Phi_{\lambda},{\mathcal{O}},0),
\end{equation*}
where ``$\text{deg}_{LS}$'' denotes the Leray-Schauder degree.

In our applications we need to consider a slight extension of coincidence degree to open (not necessarily bounded) sets.
To this purpose, we just follow the standard approach used to define the Leray-Schauder degree
for locally compact maps defined on open sets, which is classical in the theory of fixed point index
(cf.~\cite{Gr-72,Ma-99,Nu-85,Nu-93}). More in detail,
let $\Omega\subseteq X$ be an open set and suppose that the solution set
\begin{equation*}
\text{\rm Fix}\,(\Phi_{\lambda},\Omega):= \bigl{\{}u\in {\Omega} \colon u =
\Phi_{\lambda}u\bigr{\}} = \bigl{\{}u\in {\Omega}\cap \text{\rm dom}\,L \colon Lu = N_{\lambda}u\bigr{\}}
\end{equation*}
is compact. The extension of the Leray-Schauder degree to open (not necessarily
bounded) sets allows to define
\begin{equation*}
\text{deg}_{LS}(Id - \Phi_{\lambda},\Omega,0):=\text{deg}_{LS}(Id - \Phi_{\lambda},\mathcal{V},0),
\end{equation*}
where $\mathcal{V}$ is an open and bounded set with
\begin{equation}\label{eq-2.1}
\text{\rm Fix}\,(\Phi_{\lambda},\Omega) \subseteq \mathcal{V} \subseteq \overline{\mathcal{V}} \subseteq \Omega.
\end{equation}
One can check that the definition is independent of the choice of $\mathcal{V}$.
Accordingly, we define the \textit{coincidence degree} $D_{L}(L-N_{\lambda},\Omega)$
(\textit{of $L$ and $N_{\lambda}$ in $\Omega$}) as
\begin{equation*}
D_{L}(L-N_{\lambda},\Omega):= D_{L}(L-N_{\lambda},{\mathcal{V}}) =\text{deg}_{LS}(Id - \Phi_{\lambda},\mathcal{V},0),
\end{equation*}
with $\mathcal{V}$ as above.
In the special case when  $\Omega$ is an open and \textit{bounded} set such that
\begin{equation}\label{eq-2.2}
Lu \neq N_{\lambda}u, \quad \forall \, u\in \partial\Omega\cap \text{\rm dom}\,L,
\end{equation}
it is easy to verify that the above definition is exactly the usual definition of
coincidence degree, according to Mawhin. Indeed, if \eqref{eq-2.2} holds with $\Omega$ open and bounded, then,
by the excision property of the Leray-Schauder degree, we have
$\text{deg}_{LS}(Id - \Phi_{\lambda},\mathcal{V},0) = \text{deg}_{LS}(Id - \Phi_{\lambda},\Omega,0)$
for each open and bounded set ${\mathcal V}$ satisfying \eqref{eq-2.1}.
We refer also to \cite{Mo-96} for an analogous introduction from a
different point of view.

Combining the properties of coincidence degree from \cite[Chapter~II]{Ma-79}
with the theory of fixed point index for locally compact operators (cf.~\cite{Nu-85, Nu-93}), it is possible to derive the following versions
of the main properties of the degree.

\begin{itemize}
\item \textit{Additivity. }
Let $\Omega_{1}$, $\Omega_{2}$ be open and disjoint subsets of $\Omega$ such that $\text{\rm Fix}\,(\Phi_{\lambda},\Omega)\subseteq \Omega_{1}\cup\Omega_{2}$.
Then
\begin{equation*}
D_{L}(L-N_{\lambda},\Omega) = D_{L}(L-N_{\lambda},\Omega_{1})+ D_{L}(L-N_{\lambda},\Omega_{2}).
\end{equation*}

\item \textit{Excision. }
Let $\Omega_{0}$ be an open subset of $\Omega$ such that $\text{\rm Fix}\,(\Phi_{\lambda},\Omega)\subseteq \Omega_{0}$.
Then
\begin{equation*}
D_{L}(L-N_{\lambda},\Omega)=D_{L}(L-N_{\lambda},\Omega_{0}).
\end{equation*}

\item \textit{Existence theorem. }
If $D_{L}(L-N_{\lambda},\Omega)\neq0$, then $\text{\rm Fix}\,(\Phi_{\lambda},\Omega)\neq\emptyset$,
hence there exists $u\in {\Omega}\cap \text{\rm dom}\,L$ such that $Lu = N_{\lambda}u$.

\item \textit{Homotopic invariance. }
Let $H\colon\mathopen{[}0,1\mathclose{]}\times \Omega \to X$, $H_{\vartheta}(u) := H(\vartheta,u)$, be a continuous homotopy such that
\begin{equation*}
{\mathcal S}:=\bigcup_{\vartheta\in\mathopen{[}0,1\mathclose{]}} \bigl{\{}u\in \Omega\cap \text{\rm dom}\,L \colon Lu=H_{\vartheta}u\bigr{\}}
\end{equation*}
is a compact set and there exists an open neighborhood $\mathcal{W}$ of $\Sigma$ such that $\overline{\mathcal{W}}\subseteq \Omega$ and
$(K_{P}(Id-Q)H)|_{\mathopen{[}0,1\mathclose{]}\times\overline{\mathcal{W}}}$ is a compact map.
Then the map $\vartheta\mapsto D_{L}(L-H_{\vartheta},\Omega)$ is constant on $\mathopen{[}0,1\mathclose{]}$.

\end{itemize}

For more details, proofs and applications, we refer to \cite{GaMa-77,Ma-79,Ma-93} and the references therein.

In the sequel we will apply this general setting in the following manner. We consider a $L$-completely continuous operator ${\mathcal N}$
and an open (not necessarily bounded) set ${\mathcal A}$ such that
the solution set $\{u\in \overline{{\mathcal A}}\cap \text{\rm dom}\,L\colon Lu = {\mathcal N} u\}$
is compact and  disjoint from $\partial{\mathcal A}$. Therefore $D_L(L-{\mathcal N},{\mathcal A})$ is well defined.
We will proceed analogously when dealing with homotopies.

\subsection{Auxiliary lemmas}\label{section-2.1}

Within the framework introduced above, we present now two auxiliary semi-abstract results which are useful for the
computation of the coincidence degree. In the following, we denote by $B(0,d)$ and by $B[0,d]$
the open and, respectively, the closed ball of center the origin and radius $d>0$ in $X$.
For Lemma~\ref{lem-2.1-deg0} and Lemma~\ref{lem-2.2-deg1} we do not require all the assumptions on $a(t)$
and $g(s)$ stated in Theorem~\ref{th-1.1}.
In this way we hope that the two results may have an independent interest beyond that of providing a proof of Theorem~\ref{th-1.1}.

\begin{lemma}\label{lem-2.1-deg0}
Let $\lambda >0$. Let $g \colon \mathbb{R}^{+} \to \mathbb{R}^{+}$ be a continuous function such that $g(0)=0$.
Suppose $a\in L^{1}_{T}$ with $\int_{0}^{T} a(t)~\!dt < 0$.
Assume that there exists a constant $d > 0$ and a compact interval $\mathcal{I}\subseteq \mathopen{[}0,T\mathclose{]}$ such that the following properties hold.
\begin{itemize}
\item[$(A_{d,\mathcal{I}})$]
If $\alpha \geq 0$, then any non-negative $T$-periodic solution $u(t)$ of
\begin{equation}\label{eq-2.3}
u'' + c u' + \lambda a(t) g(u) + \alpha = 0
\end{equation}
satisfies $\,\max_{t\in \mathcal{I}} u(t) \neq d$.
\item[$(B_{d,\mathcal{I}})$]
For every $\beta \geq 0$ there exists a constant $D_{\beta} \geq d$ such that if $\alpha\in \mathopen{[}0,\beta\mathclose{]}$ and
$u(t)$ is any non-negative $T$-periodic solution of equation \eqref{eq-2.3}
with $\,\max_{t\in \mathcal{I}} u(t) \leq d$, then $\,\max_{t\in \mathopen{[}0,T\mathclose{]}} u(t) \leq D_{\beta}$.
\item[$(C_{d,\mathcal{I}})$]
There exists $\alpha^{*} \geq 0$ such that equation \eqref{eq-2.3}, with $\alpha=\alpha^{*}$, does not possess any
non-negative $T$-periodic solution $u(t)$ with $\,\max_{t\in \mathcal{I}} u(t) \leq d$.
\end{itemize}
Then
\begin{equation*}
D_{L}(L-N_{\lambda},\Omega_{d,\mathcal{I}}) = 0,
\end{equation*}
where
\begin{equation*}
\Omega_{d,\mathcal{I}} := \Bigl{\{} u\in X \colon \max_{t\in \mathcal{I}} |u(t)| < d \Bigr{\}}.
\end{equation*}
\end{lemma}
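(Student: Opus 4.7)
The plan is to construct a homotopy connecting $N_{\lambda}$ to $N_{\lambda}+\alpha^{*}$ (with $\alpha^{*}$ the constant provided by $(C_{d,\mathcal{I}})$, identified with the constant function in $Z$) and then exploit the invariance of the coincidence degree together with its existence property. Concretely, for $\vartheta\in\mathopen{[}0,1\mathclose{]}$ I consider the $L$-completely continuous map
\begin{equation*}
H_{\vartheta}u:=N_{\lambda}u+\vartheta\alpha^{*},
\end{equation*}
and I aim at
\begin{equation*}
D_{L}(L-N_{\lambda},\Omega_{d,\mathcal{I}})=D_{L}(L-H_{1},\Omega_{d,\mathcal{I}})=0,
\end{equation*}
where the final equality will follow from the emptiness of the solution set at $\vartheta=1$.

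A preliminary step is a weak maximum principle which ensures that every $T$-periodic solution of $Lu=H_{\vartheta}u$ is non-negative, so that the hypotheses $(A_{d,\mathcal{I}})$--$(C_{d,\mathcal{I}})$ (stated for non-negative solutions of \eqref{eq-2.3}) become applicable. This uses the specific form $f_{\lambda}(t,s)=-s$ for $s\le0$: at a global minimum $t_{0}$ of $u$, one would have $u'(t_{0})=0$ and $u''(t_{0})\ge0$, whereas the equation forces $u''(t_{0})=u(t_{0})-\vartheta\alpha^{*}<0$, a contradiction. For a Carath\'eodory solution one makes this rigorous by working with $v:=\max(-u,0)$ and analysing the linear equation $v''+cv'-v=\vartheta\alpha^{*}$ on each connected component of $\{v>0\}$. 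On non-negative functions $H_{\vartheta}u$ reduces to $\lambda a(t)g(u)+\vartheta\alpha^{*}$, so the coincidence equation becomes precisely equation \eqref{eq-2.3} with $\alpha=\vartheta\alpha^{*}\in\mathopen{[}0,\alpha^{*}\mathclose{]}$.

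I then verify the two requirements for the homotopy invariance on the unbounded open set $\Omega_{d,\mathcal{I}}$. First, hypothesis $(A_{d,\mathcal{I}})$ applied with $\alpha=\vartheta\alpha^{*}$ prevents any solution of $Lu=H_{\vartheta}u$ from touching the boundary $\{\max_{\mathcal{I}}u=d\}$, so any solution in $\overline{\Omega_{d,\mathcal{I}}}$ must belong to the interior $\Omega_{d,\mathcal{I}}$. Second, hypothesis $(B_{d,\mathcal{I}})$ with $\beta=\alpha^{*}$ yields the uniform bound $\|u\|_{\infty}\le D_{\alpha^{*}}$. Combining these with the $L$-complete continuity of $H$, the global solution set
\begin{equation*}
\mathcal{S}:=\bigcup_{\vartheta\in\mathopen{[}0,1\mathclose{]}}\bigl\{u\in\overline{\Omega_{d,\mathcal{I}}}\cap\text{\rm dom}\,L : Lu=H_{\vartheta}u\bigr\}
\end{equation*}
is bounded in $X$, closed (by continuous dependence on $\vartheta$), and in fact compact, with $\mathcal{S}\subseteq\Omega_{d,\mathcal{I}}$. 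I can then select a bounded open neighborhood $\mathcal{W}$ of $\mathcal{S}$ with $\overline{\mathcal{W}}\subseteq\Omega_{d,\mathcal{I}}$, on which $K_{P}(Id-Q)H$ is compact, and the homotopy invariance yields $D_{L}(L-N_{\lambda},\Omega_{d,\mathcal{I}})=D_{L}(L-H_{1},\Omega_{d,\mathcal{I}})$.

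The final step invokes $(C_{d,\mathcal{I}})$: at $\vartheta=1$, equation \eqref{eq-2.3} with $\alpha=\alpha^{*}$ admits no non-negative $T$-periodic solution with $\max_{\mathcal{I}}u\le d$, and by the maximum principle no other kind of solution either, so $Lu=H_{1}u$ has no solution in $\overline{\Omega_{d,\mathcal{I}}}$ at all. By the excision property (applied with $\Omega_{0}=\emptyset$) this forces $D_{L}(L-H_{1},\Omega_{d,\mathcal{I}})=0$, and the lemma follows. The main obstacle I foresee is not conceptual but technical: carefully producing the compact set $\mathcal{S}$ and the bounded neighborhood $\mathcal{W}$ inside the unbounded region $\Omega_{d,\mathcal{I}}$, which is precisely what the global a priori bound $(B_{d,\mathcal{I}})$ was tailored to provide.
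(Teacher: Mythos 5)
Your proposal is correct and follows essentially the same route as the paper: the authors also homotope $N_{\lambda}$ to $N_{\lambda}+\alpha^{*}\mathbf{1}$ (parametrizing by $\alpha\in\mathopen{[}0,\alpha^{*}\mathclose{]}$ rather than by $\vartheta$), use the maximum principle to reduce to non-negative solutions of \eqref{eq-2.3}, invoke $(A_{d,\mathcal{I}})$ to keep solutions off the boundary and $(B_{d,\mathcal{I}})$ plus complete continuity of $\Phi_{\lambda}$ to get a compact solution set inside the unbounded set $\Omega_{d,\mathcal{I}}$, and conclude via homotopy invariance together with $(C_{d,\mathcal{I}})$ and the existence property that the degree vanishes. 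The only cosmetic difference is that the paper first verifies well-definedness of the degree for each fixed $\alpha$ and then forms the union $\mathcal{S}$, while you build the total solution set of the homotopy directly; the substance is identical.
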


Notice that $\Omega_{d,\mathcal{I}}$ is open but not bounded (unless $\mathcal{I} = \mathopen{[}0,T\mathclose{]}$).

\begin{proof}
For a fixed constant $d > 0$ and a compact interval $\mathcal{I}\subseteq \mathopen{[}0,T\mathclose{]}$ as in the statement,
let us consider the open set $\Omega_{d,\mathcal{I}}$ defined above.
We study the equation
\begin{equation}\label{eq-2.4}
u'' + c u' + f_{\lambda}(t,u) + \alpha = 0,
\end{equation}
for $\alpha \geq 0$, which can be written as a coincidence equation in the space $X$
\begin{equation*}
Lu = N_{\lambda}u + \alpha {\bf 1}, \quad u\in \text{\rm dom}\,L,
\end{equation*}
where ${\bf 1}\in X$ is the constant function ${\bf 1}(t) \equiv 1$.

As a first step, we check that the coincidence degree $D_{L}(L- N_{\lambda} -\alpha{\bf 1},\Omega_{d,\mathcal{I}})$
is well defined for any $\alpha \geq 0$. To this aim, suppose that $\alpha\geq 0$ is fixed and consider the set
\begin{equation*}
\begin{aligned}
\mathcal{R}_{\alpha}
&:=\bigl{\{}u\in \text{\rm cl}\,(\Omega_{d,\mathcal{I}})\cap \text{\rm dom}\,L  \colon Lu = N_{\lambda}u + \alpha {\bf 1}\bigr{\}}
\\ &\hspace*{2.5pt}= \bigl{\{}u\in \text{\rm cl}\,(\Omega_{d,\mathcal{I}}) \colon u = \Phi_{\lambda}u + \alpha {\bf 1}\bigr{\}}.
\end{aligned}
\end{equation*}
We have that $u\in \mathcal{R}_{\alpha}$ if and only if $u(t)$ is a $T$-periodic solution of \eqref{eq-2.4} such that
$|u(t)|\leq d$ for every $t\in \mathcal{I}$. By a standard application of the maximum principle,
we find that $u(t)\geq 0$ for all $t\in {\mathbb{R}}$ and, indeed, $u(t)$ solves \eqref{eq-2.3}, with
$\max_{t\in \mathcal{I}} u(t) \leq d$. Condition $(B_{d,\mathcal{I}})$ gives a constant $D_{\alpha}$
such that $\|u\|_{\infty}\leq D_{\alpha}$ and so $\mathcal{R}_{\alpha}$ is bounded. The complete continuity of the operator
$\Phi_{\lambda}$ ensures the compactness of $\mathcal{R}_{\alpha}$. Moreover, condition $(A_{d,\mathcal{I}})$
guarantees that $|u(t)| < d$ for all $t\in \mathcal{I}$ and then we conclude that $\mathcal{R}_{\alpha}\subseteq \Omega_{d,\mathcal{I}}$.
In this manner we have proved that the coincidence degree
$D_{L}(L-N_{\lambda} - \alpha {\bf 1},\Omega_{d,\mathcal{I}})$ is well defined for any $\alpha\geq 0$.

Now, condition  $(C_{d,\mathcal{I}})$, together with the property of existence of solutions when the degree $D_{L}$ is nonzero,
implies that there exists $\alpha^{*}\geq 0$ such that
\begin{equation*}
D_{L}(L-N_{\lambda} - \alpha^{*} {\bf 1},\Omega_{d,\mathcal{I}})=0.
\end{equation*}
On the other hand, from condition $(B_{d,\mathcal{I}})$ applied on the interval
$\mathopen{[}0,\beta\mathclose{]} := \mathopen{[}0,\alpha^{*}\mathclose{]}$,
repeating the same argument as in the first step above, we find that the set
\begin{equation*}
\begin{aligned}
\mathcal{S}
:= \bigcup_{\alpha\in \mathopen{[}0,\alpha^{*}\mathclose{]}} \mathcal{R}_{\alpha} \,
&= \bigcup_{\alpha\in \mathopen{[}0,\alpha^{*}\mathclose{]}} \bigl{\{}u\in \text{\rm cl}\,(\Omega_{d,\mathcal{I}})\cap \text{\rm dom}\,L \colon Lu = N_{\lambda}u + \alpha {\bf 1}\bigr{\}}
\\ &= \bigcup_{\alpha\in \mathopen{[}0,\alpha^{*}\mathclose{]}} \bigl{\{}u\in \text{\rm cl}\,(\Omega_{d,\mathcal{I}}) \colon u = \Phi_{\lambda}u + \alpha {\bf 1}\bigr{\}}
\end{aligned}
\end{equation*}
is a compact subset of $\Omega_{d,\mathcal{I}}$. Hence, by the homotopic invariance of the coincidence degree, we have that
\begin{equation*}
D_{L}(L-N_{\lambda},\Omega_{d,\mathcal{I}}) = D_{L}(L-N_{\lambda} - \alpha^{*} {\bf 1},\Omega_{d,\mathcal{I}}) = 0.
\end{equation*}
This concludes the proof.
\end{proof}

\begin{lemma}\label{lem-2.2-deg1}
Let $\lambda >0$. Let $g \colon \mathbb{R}^{+} \to \mathbb{R}^{+}$ be a continuous function such that $g(0)=0$.
Suppose $a\in L^{1}_{T}$ with $\int_{0}^{T} a(t)~\!dt < 0$.
Assume that there exists a constant $d > 0$ such that $g(d) > 0$ and the following property holds.
\begin{itemize}
\item[$(H_{d})$]
If $\vartheta\in \mathopen{]}0,1\mathclose{]}$ and $u(t)$ is any non-negative $T$-periodic solution of
\begin{equation}\label{eq-2.5}
u'' + c u' + \vartheta \lambda a(t) g(u) = 0,
\end{equation}
then $\,\max_{t\in \mathopen{[}0,T\mathclose{]}} u(t) \neq d$.
\end{itemize}
Then
\begin{equation*}
D_{L}(L-N_{\lambda},B(0,d)) = 1.
\end{equation*}
\end{lemma}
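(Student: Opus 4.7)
The plan is to apply Mawhin's continuation theorem (in its degree-computing form) to the natural scaling homotopy $Lu = \vartheta N_\lambda u$, $\vartheta \in (0,1)$, thereby reducing $D_L(L-N_\lambda, B(0,d))$ to a one-dimensional Brouwer degree on $\ker L \cong \mathbb{R}$.

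I would first verify the admissibility of the homotopy on $\partial B(0,d)$. A standard maximum principle argument shows that any $T$-periodic solution of $Lu = \vartheta N_\lambda u$ with $\vartheta \in (0,1]$ is non-negative: at a hypothetical negative minimum $t_0$, one would have $u'(t_0)=0$ and $u''(t_0)\ge 0$, while the equation, combined with $f_\lambda(t_0,u(t_0)) = -u(t_0) > 0$, forces $u''(t_0) = \vartheta u(t_0) < 0$, a contradiction. Hence $u \ge 0$ and $u$ solves $u'' + cu' + \vartheta \lambda a(t) g(u) = 0$; by hypothesis $(H_d)$, $\|u\|_\infty = \max u \ne d$, so $u \notin \partial B(0,d)$.

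Next, on $\partial B(0,d) \cap \ker L = \{d,-d\}$ (identifying constants with reals), a direct computation gives $QN_\lambda(d) = \lambda g(d)\, \bar a < 0$ (using $g(d)>0$ and $\int_0^T a < 0$) and $QN_\lambda(-d) = d > 0$, both nonzero. Moreover, the continuous map $s \mapsto QN_\lambda(s)$ on $(-d,d)$ is positive at $s=-d$ and negative at $s=d$, so the one-dimensional Brouwer degree $\deg_{B}(-QN_\lambda, (-d,d), 0)$ equals $+1$.

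With the three hypotheses of Mawhin's continuation theorem satisfied, the degree-reduction formula
\begin{equation*}
D_L(L-N_\lambda, B(0,d)) = \deg_{B}\bigl(-JQN_\lambda|_{\ker L}, B(0,d) \cap \ker L, 0\bigr),
\end{equation*}
with $J = Id$, yields $D_L(L-N_\lambda, B(0,d)) = 1$. The main subtlety I expect will be tracking the sign convention in the reduction formula (the orientation factor coming from $\dim \ker L = 1$ and the choice of $J$) so as to end up with $+1$ rather than $-1$; a secondary technicality is justifying the maximum principle step in the Carathéodory regularity class $W^{2,1}_T$ rather than $C^2$.
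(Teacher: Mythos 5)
Your proposal is correct and follows essentially the same route as the paper: show the scaling homotopy $Lu=\vartheta N_\lambda u$ has no solutions on $\partial B(0,d)$ via the maximum principle plus $(H_d)$, check that $QN_\lambda$ (the averaged nonlinearity $f^{\#}_{\lambda}$) is nonzero with the right signs at $\pm d$, and conclude by Mawhin's continuation/degree-reduction formula that the coincidence degree equals the Brouwer degree $d_B(-f^{\#}_{\lambda},\mathopen{]}-d,d\mathclose{[},0)=1$. The sign bookkeeping you flag works out exactly as in the paper, and the Carath\'eodory maximum-principle step is treated there at the same level of detail ("standard application").
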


\begin{proof}
First of all, we claim that there are no solutions to the parameterized coincidence equation
\begin{equation*}
Lu = \vartheta N_{\lambda}u, \quad u\in \partial B(0,d) \cap \text{\rm dom}\,L, \;\; 0 < \vartheta \leq 1.
\end{equation*}
Indeed, if any such a solution $u$ exists, it is a $T$-periodic solution of
\begin{equation*}
u'' + c u' +\vartheta f_{\lambda}(t,u) = 0,
\end{equation*}
with $\|u\|_{\infty} = d$. By the definition of $f_{\lambda}(t,s)$ and a standard application
of the maximum principle, we easily get that $u(t)\geq 0$ for every $t\in \mathbb{R}$.
Therefore, $u(t)$ is a non-negative $T$-periodic solution of \eqref{eq-2.5} with $\max_{t\in \mathopen{[}0,T\mathclose{]}} u(t)= d$.
This contradicts property $(H_{d})$ and the claim is thus proved.

As a second step, we consider $QN_{\lambda}u$ for $u \in \ker L$. Since $\ker L \cong \mathbb{R}$, we have
\begin{equation*}
QN_{\lambda}u = \dfrac{1}{T}\int_{0}^{T} f_{\lambda}(t,s)~\!dt,\quad\text{ for }\, u\equiv \text{constant} = s\in {\mathbb R}.
\end{equation*}
For notational convenience, we set
\begin{equation*}
f^{\#}_{\lambda}(s) := \dfrac{1}{T}\int_{0}^{T} f_{\lambda}(t,s)~\!dt =
\begin{cases}
\, -s,  & \text{if } s \leq 0;\\ \vspace*{1pt}
\, \lambda \biggl{(}\dfrac{1}{T} \displaystyle \int_{0}^{T} a(t)~\!dt \biggr{)} g(s), & \text{if } s \geq 0.
\end{cases}
\end{equation*}
Note that $s f^{\#}_{\lambda}(s) < 0$ for each $s\neq 0$.
As a consequence, we find that $QN_{\lambda}u \neq0$ for each $u\in \partial B(0,d) \cap \ker L$.

An important result from Mawhin's continuation theorem (see \cite[Theorem~2.4]{Ma-93} and also \cite{Ma-69},
where the result was previously given in the context of the periodic problem for ODEs) guarantees that
\begin{equation*}
D_{L}(L-N_{\lambda},B(0,d)) = d_B(-QN_{\lambda}|_{\ker L}, B(0,d)\cap \ker L,0) =d_B(-f^{\#}_{\lambda}, \mathopen{]}-d,d\mathclose{[},0),
\end{equation*}
where ``$d_B$'' denotes the Brouwer degree. This latter degree is clearly equal to $1$ as
\begin{equation*}
-f^{\#}_{\lambda}(-d) = -d < 0 < \lambda \biggl{(}- \dfrac{1}{T}\int_{0}^{T} a(t)~\!dt \biggr{)} g(d) = -f^{\#}_{\lambda}(d).
\end{equation*}
This concludes the proof.
\end{proof}

\subsection{Proof of Theorem~\ref{th-1.1}: the general strategy}\label{section-2.2}

With the aid of the two lemmas just proved, we can give a proof of Theorem~\ref{th-1.1}, as follows.

We fix a constant $\rho > 0$ and consider, for $\mathcal{I}:= I$, the open set
\begin{equation*}
\Omega_{\rho,I}:= \Bigl{\{}u\in X \colon \max_{t\in I} |u(t)| < \rho \Bigr{\}}.
\end{equation*}
First of all, we show that condition $(A_{\rho,I})$ is satisfied provided that $\lambda > 0$ is sufficiently large, say
$\lambda > \lambda^{*}:= \lambda^{*}_{\rho,I}$. Such lower bound for $\lambda$ does not depend on $\alpha$.
Then, we fix an arbitrary $\lambda > \lambda^{*}$ and show that conditions $(B_{\rho,I})$ and $(C_{\rho,I})$
are satisfied as well. In particular, for $\beta = 0$, we find a constant $D_{0} = D_{0}(\rho,I,\lambda) \geq \rho$ such that
any possible solution of
\begin{equation*}
Lu = N_{\lambda} u, \quad u\in \text{\rm cl}\,(\Omega_{\rho,I}) \cap \text{\rm dom}\,L,
\end{equation*}
satisfies
\begin{equation*}
\|u\|_{\infty} \leq D_{0}.
\end{equation*}
In this manner, we have that
\begin{equation*}
B(0,\rho) \subseteq \Omega_{\rho,I} \quad \text{ and } \quad \text{\rm Fix}\,(\Phi_{\lambda},\Omega_{\rho,I}) \subseteq B(0,R), \;\; \forall \, R > D_{0}.
\end{equation*}
Moreover,
\begin{equation*}
D_{L}(L-N_{\lambda},\Omega_{\rho,I}) = D_{L}(L-N_{\lambda},\Omega_{\rho,I}\cap B(0,R)) = 0, \quad \forall \, R > D_{0}.
\end{equation*}

As a next step, using $(g_{0})$ and the regular oscillation of $g(s)$ at zero, we find a positive constant
$r_{0} < \rho$ such that for each $r\in \mathopen{]}0,r_{0}\mathclose{]}$ the condition $(H_{r})$ (of Lemma~\ref{lem-2.2-deg1}) is satisfied and therefore
\begin{equation*}
D_{L}(L-N_{\lambda},B(0,r)) = 1, \quad \forall \, 0 < r \leq r_{0}.
\end{equation*}
With a similar argument, using $(g_{\infty})$ and the regular oscillation of $g(s)$ at infinity, we find a positive constant
$R_{0} > D_{0}$ such that for each $R \geq R_{0}$ the condition $(H_{R})$ is satisfied too and therefore
\begin{equation*}
D_{L}(L-N_{\lambda},B(0,R)) = 1, \quad \forall \, R \geq R_{0}.
\end{equation*}
By the additivity property of the coincidence degree we obtain
\begin{equation}\label{eq-2.6}
D_{L}\bigl{(}L-N_{\lambda},\Omega_{\rho,I} \setminus B[0,r] \, \bigr{)} = -1, \quad \forall \, 0 < r \leq r_{0},
\end{equation}
and
\begin{equation}\label{eq-2.7}
D_{L}\bigl{(}L-N_{\lambda},B(0,R) \setminus \text{\rm cl}\,(\Omega_{\rho,I}\cap B(0,R_{0}))\,\bigr{)} = 1, \quad \forall \, R > R_{0}.
\end{equation}
Thus, in conclusion, we find a first solution $\underline{u}$ of \eqref{eq-coincidence-eq}
with $\underline{u} \in \Omega_{\rho,I} \setminus B[0,r]$ (using \eqref{eq-2.6} for a fixed $r\in\mathopen{]}0,r_{0}\mathclose{]}$)
and a second solution $\overline{u}$ of \eqref{eq-coincidence-eq} with $\overline{u} \in B(0,R)  \setminus \text{\rm cl}\,(\Omega_{\rho,I}\cap B(0,R_{0}))$
(using \eqref{eq-2.7} for a fixed $R > R_{0}$).
Both $\underline{u}(t)$ and $\overline{u}(t)$ are nontrivial $T$-periodic solutions of
\begin{equation*}
u'' + c u' + f_{\lambda}(t,u) = 0
\end{equation*}
and, by the maximum principle, they are actually \textit{non-negative} solutions of \eqref{eq-1.1}.
Finally, since by condition $(g_{0})$ we know that $a(t) g(s)/s$ is $L^{1}$-bounded in a right neighborhood
of $s = 0$, it is immediate to prove (by an elementary form of the strong maximum principle) that such solutions are in fact \textit{strictly positive}.

\section{Proof of Theorem~\ref{th-1.1}: the technical details}\label{section-3}

In this section we give a proof of Theorem~\ref{th-1.1} by following the steps described in Section~\ref{section-2.2}.
To this aim, it is sufficient to check separately the validity of the assumptions in Lemma~\ref{lem-2.1-deg0},
for $\mathcal{I} := I$ and $d = \rho > 0$ a fixed number,
and the ones in Lemma~\ref{lem-2.2-deg1}, for $d = r > 0$ small ($0 < r \leq r_{0}$) and for $d = R > 0$ large ($R\geq R_{0}$).
Notice that $r_{0}$ and $R_{0}$ are chosen after that $\rho$ and also $\lambda>0$ have been fixed.

Throughout the section, for the sake of simplicity, we suppose the validity of all the assumptions in Theorem~\ref{th-1.1}.
However, from a careful checking of the proofs below, one can see that, for the verification of each single lemma, not all of them are needed.

\subsection{Checking the assumptions of Lemma~\ref{lem-2.1-deg0} for $\lambda $ large}\label{section-3.1}

Let $\rho>0$ be fixed. Let $I:= \mathopen{[}\sigma,\tau\mathclose{]}\subseteq \mathopen{[}0,T\mathclose{]}$
be such that $a(t)\geq 0$ for a.e.~$t\in I$ and $\int_{I} a(t)~\!dt > 0$.
We fix $\varepsilon > 0$ such that for
\begin{equation*}
\sigma_{0}:= \sigma + \varepsilon < \tau - \varepsilon =: \tau_{0}
\end{equation*}
it holds that
\begin{equation*}
\int_{\sigma_{0}}^{\tau_{0}} a(t)~\!dt > 0.
\end{equation*}

Let us consider the non-negative solutions of equation \eqref{eq-2.3} for $t\in I$.
Such an equation takes the form
\begin{equation}\label{eq-3.1}
u'' + c u' + h(t,u) = 0,
\end{equation}
where we have set (for notational convenience)
\begin{equation*}
h(t,s) = h_{\lambda,\alpha}(t,s):= \lambda a(t) g(s) + \alpha,
\end{equation*}
where $\lambda > 0$ and $\alpha \geq 0$.
Note that $h(t,s) \geq 0$ for a.e.~$t\in I$ and for all $s \geq 0$.

Writing equation \eqref{eq-3.1} as
\begin{equation*}
\bigl{(}e^{ct} u'\bigr{)}' + e^{ct} h(t,u) = 0,
\end{equation*}
we find that $(e^{ct} u'(t))' \leq 0$ for almost every $t\in I$, so that the
map $t\mapsto e^{ct} u'(t)$ is non-increasing on $I$.

We split the proof into different steps.

\smallskip
\noindent
\textit{Step 1. A general estimate. }
For every non-negative solution $u(t)$ of \eqref{eq-3.1} the following estimate holds:
\begin{equation}\label{eq-3.2}
|u'(t)| \leq \dfrac{u(t)}{\varepsilon}\, e^{|c|T}, \quad \forall \, t\in \mathopen{[}\sigma_{0},\tau_{0}\mathclose{]}.
\end{equation}
To prove this, let us fix $t\in \mathopen{[}\sigma_{0},\tau_{0}\mathclose{]}$. The result is trivially true if $u'(t) = 0$.
Suppose that $u'(t) > 0$ and consider the function $u(t)$ on the interval $\mathopen{[}\sigma,t\mathclose{]}$.
Since $\xi\mapsto e^{c\xi} u'(\xi)$ is non-increasing on $\mathopen{[}\sigma,t\mathclose{]}$, we have
\begin{equation*}
u'(\xi) \geq u'(t) e^{c(t-\xi)}, \quad \forall \, \xi\in \mathopen{[}\sigma,t\mathclose{]}.
\end{equation*}
Integrating on $\mathopen{[}\sigma,t\mathclose{]}$, we obtain
\begin{equation*}
u(t) \geq u(t) - u(\sigma) \geq u'(t) e^{-|c|(t-\sigma)}(t-\sigma) \geq u'(t) e^{-|c|T}\varepsilon
\end{equation*}
and therefore \eqref{eq-3.2} follows. If $u'(t) < 0$ we obtain the same result, after an integration on $\mathopen{[}t,\tau\mathclose{]}$.
Hence, \eqref{eq-3.2} is proved in any case. Observe that only a condition on the sign of $h(t,s)$ is used and,
therefore, the estimate is valid independently on $\lambda > 0$ and $\alpha \geq 0$.

\smallskip
\noindent
\textit{Step 2. Verification of $(A_{\rho,I})$ for $\lambda > \lambda^{*}$, with $\lambda^{*}$ depending on $\rho$ and $I$ but not on $\alpha$. }
Suppose that $u(t)$ is a non-negative $T$-periodic solution of \eqref{eq-2.3} with
\begin{equation*}
\max_{t\in I} u(t) = \rho.
\end{equation*}
Let $t_{0}\in I$ be such that $u(t_{0}) = \rho$ and observe that $u'(t_{0}) = 0$, if $\sigma < t_{0} < \tau$, while
$u'(t_{0}) \leq 0$, if $t_{0} =\sigma$, and $u'(t_{0}) \geq 0$, if $t_{0} = \tau$.

First of all, we prove the existence of a constant $\delta \in \mathopen{]}0,1\mathclose{[}$ such that
\begin{equation}\label{eq-delta}
\min_{t\in \mathopen{[}\sigma_{0},\tau_{0}\mathclose{]}} u(t)\geq \delta \rho.
\end{equation}
This follows from the estimate \eqref{eq-3.2}. Indeed, if $t_{*}\in \mathopen{[}\sigma_{0},\tau_{0}\mathclose{]}$
is such that $u(t_{*}) = \min_{t\in \mathopen{[}\sigma_{0},\tau_{0}\mathclose{]}} u(t)$, we obtain that
\begin{equation}\label{eq-3.4}
|u'(t_{*})| \leq \dfrac{u(t_{*})}{\varepsilon} \, e^{|c|T}.
\end{equation}
On the other hand, by the monotonicity of the function $t\mapsto e^{ct} u'(t)$ in $\mathopen{[}\sigma,\tau\mathclose{]}$,
\begin{equation}\label{eq-3.5}
u'(\xi) e^{c\xi} \geq u'(t_{*}) e^{ct_{*}}, \quad \forall \, \xi\in \mathopen{[}\sigma,t_{*}\mathclose{]},
\end{equation}
and
\begin{equation}\label{eq-3.6}
u'(\xi) e^{c\xi} \leq u'(t_{*}) e^{ct_{*}}, \quad \forall \, \xi\in \mathopen{[}t_{*},\tau\mathclose{]}.
\end{equation}
From the properties about $u'(t_{0})$ listed above, we deduce that if $t_{0} > t_{*}$,
then $u'(t_{0}) \geq 0$ and, therefore, we must have $u'(t_{*}) \geq 0$.
Similarly, if $t_{0} < t_{*}$, then $u'(t_{0}) \leq 0$ and, therefore, we must have $u'(t_{*}) \leq 0$.
The case in which $t_{*}=t_{0}$ can be handled in a trivial way and we do not consider it.
In this manner, we have that one of the two situations occur:
either
\begin{equation}\label{eq-3.7}
\sigma\leq t_{0} < t_{*}\in \mathopen{[}\sigma_{0},\tau_{0}\mathclose{]},
\quad  u(t_{0}) = \rho, \quad u'(\xi) \leq 0, \; \forall \, \xi\in \mathopen{[}t_{0},t_{*}\mathclose{]},
\end{equation}
or
\begin{equation}\label{eq-3.8}
\tau\geq t_{0} > t_{*}\in \mathopen{[}\sigma_{0},\tau_{0}\mathclose{]},
\quad  u(t_{0}) = \rho, \quad u'(\xi) \geq 0, \; \forall \, \xi\in \mathopen{[}t_{*},t_{0}\mathclose{]}.
\end{equation}
Suppose that \eqref{eq-3.7} holds. In this situation, from \eqref{eq-3.5}
we have
$-u'(\xi) \leq - u'(t_{*}) e^{c(t_{*}-\xi)}$ for all $\xi\in \mathopen{[}t_{0},t_{*}\mathclose{]}$
and thus, integrating on $\mathopen{[}t_{0},t_{*}\mathclose{]}$ and using \eqref{eq-3.4}, we obtain
\begin{equation*}
\rho - u(t_{*}) \leq |u'(t_{*})| \, e^{|c|T}(t_{*}-t_{0}) \leq \dfrac{u(t_{*})}{\varepsilon} \, e^{2|c|T}T.
\end{equation*}
This gives \eqref{eq-delta} for
\begin{equation*}
\delta:= \dfrac{\varepsilon}{\varepsilon + e^{2|c|T} T}.
\end{equation*}
We get exactly the same estimate in case of \eqref{eq-3.8}, by using \eqref{eq-3.6} and then integrating on $\mathopen{[}t_{*},t_{0}\mathclose{]}$.
Observe that the constant $\delta\in\mathopen{]}0,1\mathclose{[}$ does not depend on $\lambda$ and $\alpha$.

Having found the constant $\delta$, we now define
\begin{equation*}
\eta = \eta(\rho):= \min \bigl{\{} g(s) \colon s\in \mathopen{[}\delta\rho,\rho\mathclose{]} \bigr{\}}.
\end{equation*}
Then, integrating equation \eqref{eq-2.3} on $\mathopen{[}\sigma_{0},\tau_{0}\mathclose{]}$ and using \eqref{eq-3.2} (for $t=\sigma_{0}$ and $t= \tau_{0}$), we obtain
\begin{equation*}
\begin{aligned}
\lambda \eta \int_{\sigma_{0}}^{\tau_{0}} a(t)~\!dt
   &\leq \lambda \int_{\sigma_{0}}^{\tau_{0}} a(t) g(u(t))~\!dt
\\ &= u'(\sigma_{0}) - u'(\tau_{0}) + c \bigl{(}u(\sigma_{0}) - u(\tau_{0})\bigr{)} - \alpha \, (\tau_{0}-\sigma_{0})
\\ &\leq 2 \dfrac{\rho}{\varepsilon} e^{|c|T} + 2|c| \rho.
\end{aligned}
\end{equation*}
Now, we define
\begin{equation}\label{eq-lambdastar}
\lambda^{*}:= \dfrac{2 \rho \bigl(\varepsilon |c| + e^{|c|T}\bigl )}{\varepsilon\eta \int_{\sigma_{0}}^{\tau_{0}} a(t)~\!dt}.
\end{equation}
Arguing by contradiction, we immediately conclude that there are no (non-negative) $T$-periodic solutions $u(t)$ of \eqref{eq-2.3} with
$\max_{t\in I} u(t) = \rho$ if $\lambda > \lambda^{*}$. Thus condition $(A_{\rho,I})$ is proved.

\smallskip
\noindent
\textit{Step 3. Verification of $(B_{\rho,I})$. } Let $u(t)$ be any non-negative $T$-periodic solution of \eqref{eq-2.3}
with $\max_{t\in I} u(t) \leq \rho$. Let us fix an instant $\hat{t}\in \mathopen{[}\sigma_{0},\tau_{0}\mathclose{]}$.
By \eqref{eq-3.2}, we know that
\begin{equation*}
|u'(\hat{t})| \leq \dfrac{\rho}{\varepsilon} \, e^{|c|T}.
\end{equation*}
Using the fact that
\begin{equation*}
|h(t,s)| \leq M(t)|s| + N(t),\quad \text{for a.e. } t\in \mathopen{[}0,T\mathclose{]}, \; \forall \, s\in \mathbb{R}, \; \forall \, \alpha\in \mathopen{[}0,\beta\mathclose{]},
\end{equation*}
with suitable $M, N\in L^{1}_{T}$ (depending on $\beta$),
from a standard application of the (generalized) Gronwall's inequality (cf.~\cite{Ha-80}), we find a constant $D_{\beta} = D_{\beta}(\rho,\lambda)$ such that
\begin{equation*}
\max_{t\in \mathopen{[}0,T\mathclose{]}} \bigl{(} |u(t)| + |u'(t)| \bigr{)} \leq D_{\beta}.
\end{equation*}
So condition $(B_{\rho,I})$ is verified.

\smallskip
\noindent
\textit{Step 4. Verification of $(C_{\rho,I})$. }
Let $u(t)$ be an arbitrary non-negative $T$-periodic solution of \eqref{eq-2.3} with $\max_{t\in I} u(t) \leq \rho$.
Integrating \eqref{eq-2.3} on $\mathopen{[}\sigma_{0},\tau_{0}\mathclose{]}$ and using \eqref{eq-3.2} (for $t=\sigma_{0}$ and $t= \tau_{0}$),
we obtain
\begin{equation*}
\begin{aligned}
\alpha \, (\tau_{0} - \sigma_{0})
   &= u'(\sigma_{0}) - u'(\tau_{0}) + c \bigl{(} u(\sigma_{0}) - u(\tau_{0}) \bigr{)} - \lambda \biggl{(}\int_{\sigma_{0}}^{\tau_{0}} a(t)g(u(t))~\!dt \biggr{)}
\\ &\leq 2 \dfrac{\rho}{\varepsilon} e^{|c|T} + 2|c| \rho =: K = K(\rho, \varepsilon).
\end{aligned}
\end{equation*}
This yields a contradiction if $\alpha > 0$ is sufficiently large. Hence $(C_{\rho,I})$ is verified,
taking $\alpha^{*} > K / (\tau_{0}-\sigma_{0})$.

\smallskip
\noindent
In conclusion, all the assumptions of Lemma~\ref{lem-2.1-deg0} have been verified for a fixed $\rho > 0$ and for $\lambda > \lambda^{*}$.
\qed

\begin{remark}\label{rem-3.1}
Notice that, among the assumptions of Theorem~\ref{th-1.1}, in this part of the proof we have used only the following ones:
$g(s) > 0$ for all $s \in \mathopen{]}0,\rho\mathclose{]}$, $\limsup_{s\to +\infty} |g(s)|/s < + \infty$, $a\in L^{1}_{T}$ and
$a(t)\geq 0$ for a.e.~$t\in I$, with $\int_{I} a(t)~\!dt > 0$.
$\hfill\lhd$
\end{remark}

\subsection{Checking the assumptions of Lemma~\ref{lem-2.2-deg1} for $r$ small}\label{section-3.2}

We prove that condition $(H_{d})$ of Lemma~\ref{lem-2.2-deg1} is satisfied for $d= r$ sufficiently small.
Indeed, we claim that there exists $r_{0} > 0$ such that there is no non-negative $T$-periodic solution $u(t)$ of
\eqref{eq-2.5} for some $\vartheta \in \mathopen{]}0,1\mathclose{]}$ with $\|u\|_{\infty} = r\in \mathopen{]}0, r_{0}\mathclose{]}$.
Arguing by contradiction, we suppose that there exists a sequence of $T$-periodic functions $u_{n}(t)$ with
$u_{n}(t) \geq 0$ for all $t\in \mathbb{R}$ and such that
\begin{equation}\label{eq-3small}
u''_{n}(t) + c u'_{n}(t) + \vartheta_{n} \lambda a(t) g(u_{n}(t)) = 0,
\end{equation}
for a.e.~$t\in\mathbb{R}$ with $\vartheta_{n} \in \mathopen{]}0,1\mathclose{]}$,
and also such that $\|u_{n}\|_{\infty} = r_{n} \to 0^{+}$.
Let $t^{*}_{n}\in \mathopen{[}0,T\mathclose{]}$ be such that $u_{n}(t^{*}_{n}) = r_{n}$.

We define
\begin{equation*}
v_{n}(t):= \dfrac{u_{n}(t)}{\|u_{n}\|_{\infty}} = \dfrac{u_{n}(t)}{r_{n}}
\end{equation*}
and observe that \eqref{eq-3small} can be equivalently written as
\begin{equation}\label{eq-v_n}
v''_{n}(t) + c v'_{n}(t) + \vartheta_{n} \lambda a(t) q(u_{n}(t))v_{n}(t) = 0,
\end{equation}
where $q \colon {\mathbb{R}}^{+} \to {\mathbb{R}}^{+}$ is defined as $q(s):=g(s)/s$ for $s > 0$ and $q(0)=0$.
Notice that $q$ is continuous on ${\mathbb{R}}^{+}$ (by $(g_{0})$).
Moreover, $q(u_{n}(t))\to 0$ uniformly in $\mathbb{R}$, as a consequence of $\|u_{n}\|_{\infty}\to 0$.
Multiplying equation \eqref{eq-v_n} by $v_{n}$ and integrating on $\mathopen{[}0,T\mathclose{]}$, we find
\begin{equation*}
\|v'_{n}\|_{L^{2}_{T}}^{2} = \int_{0}^{T}v'_{n}(t)^{2}~\!dt \leq \lambda \|a\|_{L^{1}_{T}} \sup_{t\in \mathopen{[}0,T\mathclose{]}}|q(u_{n}(t))| \to 0, \quad \text{as } \; n\to \infty.
\end{equation*}
As an easy consequence $\|v_{n}-1\|_{\infty}\to 0$, as $n\to \infty$.

Integrating \eqref{eq-3small} on $\mathopen{[}0,T\mathclose{]}$ and using the periodic boundary conditions, we have
\begin{equation*}
0 = \int_{0}^{T} a(t) g(u_{n}(t))~\!dt = \int_{0}^{T} a(t) g(r_{n})~\!dt  + \int_{0}^{T} a(t)\bigl{(}g(r_{n} v_{n}(t)) - g(r_{n})\bigr{)}~\!dt
\end{equation*}
and hence, dividing by $g(r_{n}) > 0$, we obtain
\begin{equation*}
0 < - \int_{0}^{T} a(t)~\!dt \leq  \|a\|_{L^{1}_{T}} \sup_{t\in \mathopen{[}0,T\mathclose{]}}\left|\dfrac{g(r_{n} v_{n}(t))}{g(r_{n})} - 1\right|.
\end{equation*}
Using the fact that $g(s)$ is regularly oscillating at zero and $v_{n}(t) \to 1$ uniformly as $n\to \infty$, we find that the
right-hand side of the above inequality tends to zero and thus we achieve a contradiction.
\qed

\begin{remark}\label{rem-3.2}
Notice that, among the assumptions of Theorem~\ref{th-1.1}, in this part of the proof we have used only the following ones (for verifying $(H_{r})$): $g(s) > 0$ for all $s$ in a right neighborhood of $s=0$,
$g(s)$ regularly oscillating at zero and satisfying $(g_{0})$, $a\in L^{1}_{T}$ with $\int_{0}^{T} a(t)~\!dt < 0$.
$\hfill\lhd$
\end{remark}

\subsection{Checking the assumptions of Lemma~\ref{lem-2.2-deg1} for $R$ large}\label{section-3.3}

We are going to check that condition $(H_{d})$ of Lemma~\ref{lem-2.2-deg1} is satisfied for $d=R$ sufficiently large.
In other words, we claim that there exists $R_{0} > 0$ such that there is no non-negative $T$-periodic solution $u(t)$ of
\eqref{eq-2.5} for some $\vartheta \in \mathopen{]}0,1\mathclose{]}$ with $\|u\|_{\infty} = R \geq R_{0}$.
Arguing by contradiction, we suppose that there exists a sequence of $T$-periodic functions $u_{n}(t)$ with
$u_{n}(t) \geq 0$ for all $t\in \mathbb{R}$ and such that
\begin{equation}\label{eq-3large}
u''_{n}(t) + c u'_{n}(t) + \vartheta_{n} \lambda a(t) g(u_{n}(t)) = 0,
\end{equation}
for a.e.~$t\in\mathbb{R}$ with $\vartheta_{n} \in \mathopen{]}0,1\mathclose{]}$,
and also such that $\|u_{n}\|_{\infty} = R_{n} \to +\infty$.
Let $t^{*}_{n}\in \mathopen{[}0,T\mathclose{]}$ be such that $u_{n}(t^{*}_{n}) = R_{n}$.

First of all, we claim that $u_{n}(t) \to +\infty$ uniformly in $t$ (as $n\to \infty$). Indeed, to be more precise, we have that
$u_{n}(t) \geq R_{n}/2$ for all $t$.
To prove this assertion, let us suppose, by contradiction, that $\min u_{n}(t) < R_{n}/2$.
In this case, we can take a maximal compact interval $\mathopen{[}\alpha_{n},\beta_{n}\mathclose{]}$
containing $t^{*}_{n}$ and such that $u_{n}(t) \geq R_{n}/2$ for all $t\in\mathopen{[}\alpha_{n},\beta_{n}\mathclose{]}$.
By the maximality of the interval, we also have that $u_{n}(\alpha_{n}) = u_{n}(\beta_{n}) = R_{n}/2$
with $u_{n}'(\alpha_{n}) \geq 0 \geq u_{n}'(\beta_{n})$.

We set
\begin{equation*}
w_{n}(t) := u_{n}(t) - \dfrac{R_{n}}{2}
\end{equation*}
and observe that $0 \leq w_{n}(t) \leq R_{n}/2$ for all $t\in \mathopen{[}\alpha_{n},\beta_{n}\mathclose{]}$.
Equation \eqref{eq-3large} reads equivalently as
\begin{equation*}
- w''_{n}(t) - c w'_{n}(t) = \vartheta_{n} \lambda a(t) g(u_{n}(t)).
\end{equation*}
Multiplying this equation by $w_{n}(t)$ and integrating on $\mathopen{[}\alpha_{n},\beta_{n}\mathclose{]}$, we obtain
\begin{equation*}
\int_{\alpha_{n}}^{\beta_{n}} w'_{n}(t)^{2} ~\!dt \leq \lambda \|a\|_{L^{1}_{T}} \dfrac{R_{n}}{2} \sup_{\frac{R_{n}}{2} \leq s \leq R_{n}} |g(s)|.
\end{equation*}
From condition $(g_{\infty})$, for any fixed $\varepsilon > 0$ there exists $L_{\varepsilon} > 0$ such that $|g(s)| \leq \varepsilon s$,
for all $s\geq L_{\varepsilon}$. Thus, for $n$ sufficiently large so that $R_{n} \geq 2 L_{\varepsilon}$, we find
\begin{equation*}
\int_{\alpha_{n}}^{\beta_{n}} w'_{n}(t)^{2} ~\!dt \leq \dfrac{1}{2} \lambda \varepsilon R_{n}^{2} \|a\|_{L^{1}_{T}}.
\end{equation*}
By an elementary form of the Poincar\'{e}-Sobolev inequality, we conclude that
\begin{equation*}
\dfrac{R_{n}^{2}}{4} = \max_{t\in \mathopen{[}\alpha_{n},\beta_{n}\mathclose{]}}|w_{n}(t)|^{2} \leq T \int_{\alpha_{n}}^{\beta_{n}} w'_{n}(t)^{2} ~\!dt
\leq \dfrac{1}{2} \lambda \varepsilon T R_{n}^{2} \|a\|_{L^{1}_{T}}
\end{equation*}
and a contradiction is achieved if we take $\varepsilon$ sufficiently small.

Consider now the auxiliary function
\begin{equation*}
v_{n}(t):= \dfrac{u_{n}(t)}{\|u_{n}\|_{\infty}} = \dfrac{u_{n}(t)}{R_{n}}
\end{equation*}
and divide equation \eqref{eq-3large} by $R_{n}$. In this manner we obtain again \eqref{eq-v_n}.
By $(g_{\infty})$ and the fact that $u_{n}(t) \to +\infty$ uniformly in $t$, we conclude that
$q(u_{n}(t))={g(u_{n}(t))}/{u_{n}(t)} \to 0$ uniformly (as $n\to \infty$).
Hence, we are exactly in the same situation as in the case we have already discussed above in Section~\ref{section-3.2} for $r$ small and we can end the
proof in a similar way. More precisely, $\|v'_{n}\|_{L^{2}_{T}}\to 0$ as $n\to \infty$
(this follows by multiplying equation \eqref{eq-v_n} by $v_{n}(t)$ and integrating on $\mathopen{[}0,T\mathclose{]}$)
so that $\|v_{n}-1\|_{\infty} \to 0$, as $n\to \infty$.
Then, integrating equation \eqref{eq-3large} on $\mathopen{[}0,T\mathclose{]}$ and dividing by $g(R_{n}) > 0$, we obtain
\begin{equation*}
0 < - \int_{0}^{T} a(t)~\!dt \leq  \|a\|_{L^{1}_{T}} \sup_{t\in \mathopen{[}0,T\mathclose{]}}\left|\dfrac{g(R_{n} v_{n}(t))}{g(R_{n})} - 1\right|.
\end{equation*}
Using the fact that $g(s)$ is regularly oscillating at infinity and $v_{n}(t) \to 1$ uniformly as $n\to \infty$, we find that the
right-hand side of the above inequality tends to zero and thus we achieve a contradiction.
\qed

\begin{remark}\label{rem-3.3}
Notice that, among the assumptions of Theorem~\ref{th-1.1}, in this part of the proof we have used only the following ones (for verifying $(H_{R})$):
$g(s) > 0$ for all $s$ in a  neighborhood of infinity,
$g(s)$ regularly oscillating at infinity and satisfying $(g_{\infty})$, $a\in L^{1}_{T}$ with $\int_{0}^{T} a(t)~\!dt < 0$.
$\hfill\lhd$
\end{remark}

\section{Related results}\label{section-4}

In this section we present some consequences and variants obtained from Theorem~\ref{th-1.1}. We also examine
the cases of non-existence of solutions when the parameter $\lambda$ is small.

\subsection{Proof of Corollary~\ref{cor-1.1}}\label{section-4.1}

In order to deduce Corollary~\ref{cor-1.1} from Theorem~\ref{th-1.1}, we stress the fact that
the constant $\lambda^{*} >0$ (defined in \eqref{eq-lambdastar}) is produced along the proof of Lemma~\ref{lem-2.1-deg0}
in dependence of an interval $I \subseteq \mathopen{[}0,T\mathclose{]}$ where $a(t)\geq 0$ and $\int_{I} a(t)~\!dt >0$.
For this step in the proof we do not need any information about the weight function on $\mathopen{[}0,T\mathclose{]}\setminus I$.
As a consequence, when we apply our result to equation \eqref{eq-1.4}, we have that $\lambda^{*}$ can be chosen independently on $\mu$.
On the other hand, for Lemma~\ref{lem-2.2-deg1} with  $r$ small as well as with $R$ large,
we do not need any special condition on $\lambda$ (except that $\lambda$ in \eqref{eq-3small} or in \eqref{eq-3large} is fixed)
and we use only the fact that $\int_{0}^{T} a(t)~\!dt <0$ (without requiring any other information on the sign of $a(t)$).
Accordingly, once that $\lambda > \lambda^{*}$ is fixed, to obtain a pair of positive $T$-periodic solutions we only need to check that
the integral of the weight function on $\mathopen{[}0,T\mathclose{]}$ is negative. For equation \eqref{eq-1.4} this request is equivalent to
\begin{equation*}
\dfrac{\mu}{\lambda} > \dfrac{\int_{0}^{T} a^{+}(t)~\!dt}{\int_{0}^{T} a^{-}(t)~\!dt}.
\end{equation*}
By these remarks, we deduce immediately Corollary~\ref{cor-1.1} from Theorem~\ref{th-1.1}.
\qed

\subsection{Existence of small/large solutions}\label{section-4.b}

Theorem~\ref{th-1.1} guarantees the existence of at least two positive $T$-periodic solutions of \eqref{eq-1.1}.
More in detail, we have found a first solution in $\Omega_{\rho,I} \setminus B[0,r]$
and a second one in $B(0,R) \setminus \text{\rm cl}\,(\Omega_{\rho,I}\cap B(0,R_{0}))$,
verifying that the coincidence degree is nonzero in these sets (see \eqref{eq-2.6} and \eqref{eq-2.7}).
The positivity of both the solutions follows from maximum principle arguments.
A careful reading of the proof (cf.~Section~\ref{section-3}) shows that weaker conditions on $g(s)$ are sufficient to repeat some of the
steps in Section~\ref{section-2.2} in order to prove \eqref{eq-2.6} (or \eqref{eq-2.7})
and thus obtain the existence of a small (or large, respectively) positive $T$-periodic solution of \eqref{eq-1.1}.

More precisely, taking into account Remark~\ref{rem-3.1} and Remark~\ref{rem-3.2}
we can state the following theorem, ensuring the existence of a small positive $T$-periodic solution.

\begin{theorem}\label{th-small}
Let $g \colon {\mathbb{R}}^{+} \to {\mathbb{R}}^{+}$ be a continuous function satisfying $(g_{*})$ and
\begin{equation}\label{ginf}
\limsup_{s\to +\infty}\dfrac{g(s)}{s} < +\infty.
\end{equation}
Suppose also that $g$ is regularly oscillating at zero and satisfies $(g_{0})$.
Let $a \colon {\mathbb{R}} \to {\mathbb{R}}$ be a locally integrable $T$-periodic function satisfying the average
condition $(a_{*})$. Furthermore, suppose that there exists an interval $I\subseteq \mathopen{[}0,T\mathclose{]}$ such that
$a(t) \geq 0$ for a.e.~$t\in I$ and $\int_{I} a(t)~\!dt > 0$.
Then there exists $\lambda^{*}>0$ such that for each $\lambda > \lambda^{*}$ equation \eqref{eq-1.1} has at least one
positive $T$-periodic solution.
\end{theorem}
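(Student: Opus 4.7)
The plan is to emulate the strategy of Section~\ref{section-2.2} used for Theorem~\ref{th-1.1}, but to stop after extracting only the ``small'' solution lying in $\Omega_{\rho,I}\setminus B[0,r]$: producing a ``large'' second solution in $B(0,R)\setminus\text{\rm cl}\,(\Omega_{\rho,I}\cap B(0,R_{0}))$ would require regular oscillation of $g$ at infinity together with the full sublinearity $(g_{\infty})$ (as in Remark~\ref{rem-3.3}), neither of which is assumed here.

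First, I would fix an arbitrary $\rho>0$ and apply Lemma~\ref{lem-2.1-deg0} with $\mathcal{I}:=I$ and $d:=\rho$. By Remark~\ref{rem-3.1}, the verification of conditions $(A_{\rho,I})$, $(B_{\rho,I})$, $(C_{\rho,I})$ carried out in Section~\ref{section-3.1} uses only that $g(s)>0$ on $\mathopen{]}0,\rho\mathclose{]}$, $\limsup_{s\to+\infty}g(s)/s<+\infty$, $a\in L^{1}_{T}$ with $a\geq 0$ a.e.\ on $I$ and $\int_{I}a>0$. All these are contained in the hypotheses of Theorem~\ref{th-small}; in particular the replacement \eqref{ginf} of $(g_{\infty})$ is precisely the linear-growth majorant invoked by the Gronwall step in $(B_{\rho,I})$. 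Consequently, with $\lambda^{*}$ defined by \eqref{eq-lambdastar}, for every $\lambda>\lambda^{*}$ one obtains
\begin{equation*}
D_{L}(L-N_{\lambda},\Omega_{\rho,I})=0.
\end{equation*}

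Next, I would fix such a $\lambda>\lambda^{*}$ and apply Lemma~\ref{lem-2.2-deg1} for $d=r$ small. By Remark~\ref{rem-3.2}, the argument of Section~\ref{section-3.2} works verbatim under the present assumptions (regular oscillation of $g$ at zero, $(g_{0})$ and $(a_{*})$), producing $r_{0}\in\mathopen{]}0,\rho\mathclose{[}$ such that $(H_{r})$ holds for every $r\in\mathopen{]}0,r_{0}\mathclose{]}$; hence $D_{L}(L-N_{\lambda},B(0,r))=1$ for all such $r$.

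Finally, I would combine the two degree computations by additivity. Fixing any $r\in\mathopen{]}0,r_{0}\mathclose{]}$, since $r<\rho$ the ball $B(0,r)$ is contained in $\Omega_{\rho,I}$, and $B(0,r)$ together with $\Omega_{\rho,I}\setminus B[0,r]$ form disjoint open subsets of $\Omega_{\rho,I}$; property $(H_{r})$ (with $\vartheta=1$) excludes fixed points of $\Phi_{\lambda}$ on $\partial B(0,r)\cap\Omega_{\rho,I}$, while condition $(B_{\rho,I})$ with $\beta=0$ (already verified) provides the a priori bound ensuring compactness of $\text{\rm Fix}\,(\Phi_{\lambda},\Omega_{\rho,I})$ required by the extended degree. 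Additivity then yields
\begin{equation*}
D_{L}(L-N_{\lambda},\Omega_{\rho,I}\setminus B[0,r])=-1,
\end{equation*}
so there exists $u\in\Omega_{\rho,I}\setminus B[0,r]$ with $Lu=N_{\lambda}u$ and $\|u\|_{\infty}>r>0$; a standard maximum principle argument shows $u\geq 0$, and $(g_{0})$ makes $a(t)g(u)/u$ locally $L^{1}$-bounded near $u=0$, so the strong maximum principle upgrades this to $u(t)>0$ everywhere. The only real obstacle is the bookkeeping in the first step, namely checking that the weakened growth condition \eqref{ginf} is still enough to drive the Gronwall-type a priori bound underlying $(B_{\rho,I})$; this is precisely the reason for introducing \eqref{ginf} in place of $(g_{\infty})$.
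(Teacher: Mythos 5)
Your proposal is correct and follows essentially the same route as the paper: Theorem~\ref{th-small} is obtained exactly by combining Lemma~\ref{lem-2.1-deg0} (whose verification in Section~\ref{section-3.1}, as recorded in Remark~\ref{rem-3.1}, needs only the linear-growth bound \eqref{ginf} at infinity) with Lemma~\ref{lem-2.2-deg1} for $r$ small (Remark~\ref{rem-3.2}), and then extracting the solution in $\Omega_{\rho,I}\setminus B[0,r]$ by additivity and the maximum principle, just as in the scheme of Section~\ref{section-2.2}. Your attention to the role of \eqref{ginf} in the Gronwall step of $(B_{\rho,I})$ and to the compactness needed for the extended degree matches the paper's intended argument.
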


On the other hand, in view of Remark~\ref{rem-3.1} and Remark~\ref{rem-3.3} we have the following result
giving the existence of a large positive $T$-periodic solution.

\begin{theorem}\label{th-large}
Let $g \colon {\mathbb{R}}^{+} \to {\mathbb{R}}^{+}$ be a continuous function satisfying $(g_{*})$ and
\begin{equation}\label{g00}
\limsup_{s\to 0^{+}}\dfrac{g(s)}{s} < +\infty.
\end{equation}
Suppose also that $g$ is regularly oscillating at infinity and satisfies $(g_{\infty})$.
Let $a \colon {\mathbb{R}} \to {\mathbb{R}}$ be a locally integrable $T$-periodic function satisfying the average
condition $(a_{*})$. Furthermore, suppose that there exists an interval $I\subseteq \mathopen{[}0,T\mathclose{]}$ such that
$a(t) \geq 0$ for a.e.~$t\in I$ and $\int_{I} a(t)~\!dt > 0$.
Then there exists $\lambda^{*}>0$ such that for each $\lambda > \lambda^{*}$ equation \eqref{eq-1.1} has at least one
positive $T$-periodic solution.
\end{theorem}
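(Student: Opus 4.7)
The strategy is to reduce to the same coincidence-degree computations used in the proof of Theorem~\ref{th-1.1}, but to dispense with the computation near zero that required $(g_{0})$ and regular oscillation of $g$ at zero. This is possible because Remark~\ref{rem-3.1} and Remark~\ref{rem-3.3} precisely identify the minimal hypotheses required for the two relevant degree computations, and both sets of hypotheses are met under the weaker assumptions of Theorem~\ref{th-large}.

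Concretely, I would first fix $\rho > 0$ and apply Lemma~\ref{lem-2.1-deg0} exactly as in Section~\ref{section-3.1}, producing a threshold $\lambda^{*} > 0$ such that, for every $\lambda > \lambda^{*}$,
\[
D_{L}(L - N_{\lambda}, \Omega_{\rho,I}) = 0.
\]
By Remark~\ref{rem-3.1} the only information needed about $g$ in this step is that $g(s)>0$ on $\mathopen{]}0,\rho\mathclose{]}$ (granted by $(g_{*})$) and $\limsup_{s\to+\infty}g(s)/s<+\infty$, which here is much stronger than needed since $(g_{\infty})$ implies \eqref{ginf}. Step~3 of Section~\ref{section-3.1} also yields, once $\lambda>\lambda^{*}$ is fixed, an a~priori bound $D_{0}=D_{0}(\rho,I,\lambda)$ for every fixed point of $\Phi_{\lambda}$ lying in $\Omega_{\rho,I}$. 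I would then apply Lemma~\ref{lem-2.2-deg1} for $d=R$ large, reproducing the argument of Section~\ref{section-3.3}; Remark~\ref{rem-3.3} records that this step only requires $g(s)>0$ near infinity, the regular oscillation of $g$ at infinity, $(g_{\infty})$, and $\int_{0}^{T}a(t)\,dt<0$, all of which are in force. Choosing $R_{0}>D_{0}$ accordingly, one obtains
\[
D_{L}(L - N_{\lambda}, B(0, R)) = 1, \qquad \forall\, R \geq R_{0}.
\]

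By excision (applicable since every fixed point of $\Phi_{\lambda}$ in $\Omega_{\rho,I}$ lies in $B(0,D_{0})\subset B(0,R_{0})$) together with additivity of the coincidence degree, for any $R > R_{0}$ I would deduce
\[
D_{L}\bigl(L - N_{\lambda},\, B(0, R) \setminus \text{\rm cl}\,(\Omega_{\rho,I} \cap B(0, R_{0}))\bigr) = 1,
\]
and hence the existence of a $T$-periodic solution $u$ of $u''+cu'+f_{\lambda}(t,u)=0$ in this set. Such a $u$ is necessarily nontrivial (lying outside $\Omega_{\rho,I}$ it satisfies $\max_{t\in I}|u(t)|\geq\rho$), and the standard maximum principle applied to $f_{\lambda}$ forces $u(t)\geq0$ for every $t$, so that $u$ actually solves \eqref{eq-1.1}.

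The only delicate point, and the step I expect to require the most care, is upgrading non-negativity to strict positivity. In the proof of Theorem~\ref{th-1.1} this uses $(g_{0})$ to ensure that $a(t)g(u(t))/u(t)$ is $L^{1}$-integrable, so that an elementary form of the strong maximum principle applies. Here $(g_{0})$ is replaced by the weaker \eqref{g00}, but since $\limsup_{s\to 0^{+}}g(s)/s<+\infty$ still guarantees that $g(s)/s$ is bounded on some interval $\mathopen{]}0,\delta\mathclose{]}$, the quotient $a(t)g(u(t))/u(t)$ remains dominated by a constant multiple of $|a(t)|$ wherever $u$ is small, and hence is $L^{1}$ on $\mathopen{[}0,T\mathclose{]}$. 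The same strong maximum principle argument therefore concludes $u(t)>0$ for all $t\in\mathbb{R}$, completing the proof.
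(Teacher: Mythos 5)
Your proposal is correct and follows essentially the same route as the paper, which proves Theorem~\ref{th-large} by invoking Remark~\ref{rem-3.1} and Remark~\ref{rem-3.3} to rerun the degree computations of Sections~\ref{section-3.1} and~\ref{section-3.3} (with $(g_{\infty})$ supplying the linear growth bound needed for Gronwall) and then using \eqref{g00} in place of $(g_{0})$ for the strong maximum principle step. Your filling-in of the additivity/excision argument and of the strict-positivity upgrade matches the paper's intended proof.
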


Notice that the possibility of applying a strong maximum principle (in order to obtain positive solutions)
is ensured by $(g_{0})$ in Theorem~\ref{th-small}, while it follows by \eqref{g00} in Theorem~\ref{th-large}.
The dual condition \eqref{ginf} in Theorem~\ref{th-small} is, on the other hand, needed to apply
Gronwall's inequality (checking the assumptions of Lemma~\ref{lem-2.1-deg0}).

\subsection{Smoothness versus regular oscillation}\label{section-4.2}

It can be observed that the assumptions of regular oscillation of $g(s)$ at zero or, respectively, at infinity can be replaced by
suitable smoothness assumptions. Indeed, we can provide an alternative manner to check the assumptions of Lemma~\ref{lem-2.2-deg1}
for $r$ small or $R$ large, by assuming that $g(s)$ is smooth in a neighborhood of zero or, respectively, in a neighborhood of infinity.
For this purpose, we present some preliminary considerations.

Let $u(t)$ be a positive and $T$-periodic solution of
\begin{equation}\label{eq-4.1}
u'' + c u' + \nu a(t) g(u) = 0,
\end{equation}
where $\nu > 0$ is a given parameter (in the following, we will take $\nu = \lambda$ or $\nu = \vartheta\lambda$).
Suppose that the map $g(s)$ is continuously differentiable on an interval containing the range of $u(t)$. In such a situation, we can perform the
change of variable
\begin{equation}\label{eq-4.2}
z(t):=\dfrac{u'(t)}{\nu g(u(t))}
\end{equation}
and observe that $z(t)$ satisfies
\begin{equation}\label{eq-4.3}
z' + c z = - \nu g'(u(t))z^{2} - a(t).
\end{equation}
The function $z(t)$ is absolutely continuous, $T$-periodic with $\int_{0}^{T} z(t)~\!dt = 0$
and, moreover, there exists a $t^{*}\in \mathopen{[}0,T\mathclose{]}$ such that $z(t^{*}) = 0$.

This change of variables (recently considered also in \cite{BoZa-2013}) is used to provide a nonexistence result
as well as a priori bounds for the solutions. We premise the following result.

\begin{lemma}\label{lem-4.1}
Let $J\subseteq \mathbb{R}$ be an interval.
Let $g \colon J \to {\mathbb{R}}^{+}_{0}$ be a continuously differentiable function with bounded derivative (on $J$).
Let $a \in L^{1}_{T}$ satisfy $(a_{*})$.
Then there exists $\omega_{*}>0$ such that, if
\begin{equation*}
\nu\, \sup_{s\in J}|g'(s)| < \omega_{*},
\end{equation*}
there are no $T$-periodic solutions of \eqref{eq-4.1}
with $u(t)\in J$, for all $t\in\mathbb{R}$.
\end{lemma}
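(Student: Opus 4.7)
The plan is to argue by contradiction. Suppose there is a $T$-periodic solution $u$ of \eqref{eq-4.1} with $u(t)\in J$ for every $t$. Since $g>0$ on $J$, the change of variable $z:=u'/(\nu g(u))$ introduced in \eqref{eq-4.2} is well defined, continuous, $T$-periodic with mean zero, vanishes at some $t^{*}\in[0,T]$, and satisfies \eqref{eq-4.3}.

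First I extract a lower bound on $\int_0^T z^2\,dt$. Integrating \eqref{eq-4.3} on $[0,T]$, the periodicity of $z$ together with the identity $\int_0^T z\,dt=0$ kill the $z'$ and $cz$ terms, giving $\nu\int_0^T g'(u)\,z^2\,dt=-\int_0^T a\,dt$, a fixed positive quantity by $(a_*)$. Setting $M:=\sup_{J}|g'|$, the pointwise bound $|g'|\le M$ yields
\[
\int_0^T z(t)^2\,dt \ \ge\ \frac{|\int_0^T a|}{\nu M}. \qquad (\star)
\]

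Next I derive an a priori upper bound on $\|z\|_\infty$. Writing \eqref{eq-4.3} as $(e^{ct}z)'=-e^{ct}(\nu g'(u)z^2+a)$ and integrating from $t^{*}$ to $t\in[t^{*},t^{*}+T]$ (using $z(t^{*})=0$), I obtain $|z(t)|\le e^{|c|T}\bigl(\nu M\int_{t^{*}}^{t}z^2\,ds+\|a\|_{L^1_T}\bigr)$. Setting $\Phi(t):=\sup_{s\in[t^{*},t]}|z(s)|$ (continuous, non-decreasing, $\Phi(t^{*})=0$), the estimate turns into the quadratic inequality
\[
B(t)\,\Phi(t)^2-\Phi(t)+A\ \ge\ 0, \qquad B(t):=\nu M e^{|c|T}(t-t^{*}),\ \ A:=e^{|c|T}\|a\|_{L^1_T}.
\]
If $\nu M<1/(4T e^{2|c|T}\|a\|_{L^1_T})$, the discriminant is strictly positive on $(t^{*},t^{*}+T]$, so the quadratic has two real roots $Y_-(t)<Y_+(t)$ and $\Phi$ must avoid the open gap $(Y_-(t),Y_+(t))$. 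The two closed subsets $\{t:\Phi(t)\le Y_-(t)\}$ and $\{t:\Phi(t)\ge Y_+(t)\}$ are disjoint and cover $(t^{*},t^{*}+T]$; by connectedness together with the initial state $\Phi(t)\to 0<A=\lim_{t\to t^{*+}}Y_-(t)$, only the first set can be nonempty, so $\Phi(t)\le Y_-(t)$ throughout. The elementary inequality $\sqrt{1-x}\ge 1-x$ on $[0,1]$ then gives $Y_-(t^{*}+T)\le 2A$, whence
\[
\|z\|_\infty\ =\ \Phi(t^{*}+T)\ \le\ 2e^{|c|T}\|a\|_{L^1_T}.
\]

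Combining this with $(\star)$ through $\int_0^T z^2\le T\|z\|_\infty^2$ yields
\[
\Bigl|\int_0^T a\Bigr|\ \le\ \nu M\int_0^T z^2\,dt\ \le\ 4\nu M T e^{2|c|T}\|a\|_{L^1_T}^2,
\]
so $\nu M\ge\omega_{*}:=|\int_0^T a|/(4T e^{2|c|T}\|a\|_{L^1_T}^2)>0$, contradicting the hypothesis whenever $\nu M<\omega_{*}$. Note that $\omega_{*}\le 1/(4Te^{2|c|T}\|a\|_{L^1_T})$ automatically, since $|\int a|\le\|a\|_{L^1_T}$, so the smallness assumption required for the branch-selection argument is also met. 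The main obstacle is precisely that branch-selection step: one must show $\Phi$ tracks the small root $Y_-(t)$ rather than jumping to the far branch $[Y_+(t),+\infty)$, and this relies on the continuity of $\Phi$, on the strict separation $Y_-(t)<Y_+(t)$ guaranteed by the smallness of $\nu M$, and on the initial datum $\Phi(t^{*})=0$.
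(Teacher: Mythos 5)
Your proof is correct and follows essentially the same route as the paper: the Riccati-type substitution $z=u'/(\nu g(u))$ of \eqref{eq-4.2}, an a priori $L^{\infty}$ bound on $z$ obtained by a continuity/bootstrap argument from the integrated form of \eqref{eq-4.3}, and finally integration of \eqref{eq-4.3} over a period to contradict $(a_{*})$. The only (immaterial) difference is that the paper fixes a threshold $M>e^{|c|T}\|a\|_{L^{1}_{T}}$ in advance and rules out a first crossing of the level $M$ on a maximal interval issuing from the zero $t^{*}$ of $z$, whereas you solve the quadratic inequality for the running supremum and select the small branch; both yield an $\omega_{*}$ depending only on $a$, $c$, $T$ and independent of $\nu$ and $J$, as required.
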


\begin{proof}
For notational convenience, let us set
\begin{equation*}
D:= \sup_{s\in J}|g'(s)|.
\end{equation*}
First of all, we fix a positive constants
$M > e^{|c|T} \|a\|_{L^{1}_{T}}$
and define
\begin{equation*}
\omega_{*} := \min \Biggl{\{} \dfrac{M-e^{|c|T}\|a\|_{L^{1}_{T}}}{M^{2} T e^{|c|T}} , \dfrac{-\int_{0}^{T} a(t)~\!dt}{M^{2} T} \Biggr{\}}.
\end{equation*}
Note that $\omega_{*}$ does not depend on $\nu$, $J$ and $D$.
We shall prove that if
\begin{equation*}
0 < \nu D < \omega_{*}
\end{equation*}
equation \eqref{eq-4.1} has no $T$-periodic solution $u(t)$ with range in $J$.

By contradiction we suppose that $u(t)$ is a solution of \eqref{eq-4.1} with $u(t)\in J$, for all $t\in\mathbb{R}$.
Setting $z(t)$ as in \eqref{eq-4.2}, we claim that
\begin{equation}\label{eqz-ineq}
\|z\|_{\infty} \leq M.
\end{equation}
Indeed, if by contradiction we suppose that \eqref{eqz-ineq} is not true, then using the fact that $z(t)$
vanishes at some point of $\mathopen{[}0,T\mathclose{]}$, we can find a maximal interval $\mathcal{I}$
of the form $\mathopen{[}t^{*},\tau\mathclose{]}$ or $\mathopen{[}\tau,t^{*}\mathclose{]}$
such that $|z(t)| \leq M$ for all $t\in \mathcal{I}$ and $|z(t)| > M$ for some $t\notin \mathcal{I}$.
By the maximality of the interval $\mathcal{I}$, we also know that $|z(\tau)| = M$.
Multiplying equation \eqref{eq-4.3} by $e^{c(t-\tau)}$, we achieve
\begin{equation*}
\bigl{(} z(t) e^{c(t-\tau)} \bigr{)}' = \bigl{(} - \nu g'(u(t))z^{2}(t) - a(t)\bigr{)} e^{c(t-\tau)}.
\end{equation*}
Then, integrating on $\mathcal{I}$ and passing to the absolute value, we obtain
\begin{equation*}
\begin{aligned}
M &= |z(\tau)| = |z(\tau) - z(t^{*}) e^{c(t^{*}-\tau)}| \leq  \biggl{|} \int_{\mathcal{I}}
\nu g '(u(t)) z^{2}(t) ~\!dt \biggr{|} \, e^{|c|T}+ \|a\|_{L^{1}_{T}}e^{|c|T} \\
& \leq \nu D M^{2} T e^{|c|T}+ \|a\|_{L^{1}_{T}} e^{|c|T} < \omega_{*} M^{2} T e^{|c|T}+ \|a\|_{L^{1}_{T}} e^{|c|T} \leq M,
\end{aligned}
\end{equation*}
a contradiction. In this manner, we have verified that \eqref{eqz-ineq} is true.

Now, integrating \eqref{eq-4.3} on $\mathopen{[}0,T\mathclose{]}$ and using \eqref{eqz-ineq}, we reach
\begin{equation*}
0 < - \int_{0}^{T} a(t)~\!dt  = \int_{0}^{T} \nu g '(u(t)) z^{2}(t) ~\!dt < \omega_{*} M^{2} T \leq - \int_{0}^{T} a(t)~\!dt,
\end{equation*}
a contradiction. This concludes the proof.
\end{proof}

The same change of variable is employed to provide the following variant of Theorem~\ref{th-1.1}.

\begin{theorem}\label{th-4.1}
Let $g \colon {\mathbb{R}}^{+} \to {\mathbb{R}}^{+}$ be a continuous function satisfying $(g_{*})$
and such that $g(s)$ is continuously differentiable on a right neighborhood
of $s=0$ and on a neighborhood
of infinity. Suppose also that $(g_{0})$ and
\begin{equation*}
g'(\infty):=\lim_{s\to +\infty}{g'(s)} = 0
\leqno{(g'_{\infty})}
\end{equation*}
hold.
Let $a \colon {\mathbb{R}} \to {\mathbb{R}}$ be a locally integrable $T$-periodic function satisfying the average
condition $(a_{*})$.
Furthermore, suppose that there exists an interval $I\subseteq \mathopen{[}0,T\mathclose{]}$ such that
$a(t) \geq 0$ for a.e.~$t\in I$ and $\int_{I} a(t)~\!dt > 0$.
Then there exists $\lambda^{*}>0$ such that for each $\lambda > \lambda^{*}$ equation \eqref{eq-1.1} has at least two
positive $T$-periodic solutions.
\end{theorem}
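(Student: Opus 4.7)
The plan is to follow the three-step scheme for Theorem~\ref{th-1.1} laid out in Section~\ref{section-2.2}: verify Lemma~\ref{lem-2.1-deg0} on $\Omega_{\rho,I}$ for $\lambda > \lambda^{*}$, then verify Lemma~\ref{lem-2.2-deg1} on $B(0,r)$ for $r$ small and on $B(0,R)$ for $R$ large, and finally combine the degrees by additivity. The first verification carries over verbatim from Section~\ref{section-3.1}: by Remark~\ref{rem-3.1} it uses only $(g_{*})$, the sign and integrability of $a(t)$, and $\limsup_{s\to+\infty}g(s)/s<+\infty$; this last bound is a consequence of $(g'_{\infty})$, since $g(s)\le g(1)+s\sup_{\xi\in\mathopen{[}1,s\mathclose{]}}|g'(\xi)|$. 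So the same $\lambda^{*}$ from \eqref{eq-lambdastar} is produced, together with the a priori bound $D_{0}$, and all that remains is to replace the two regular-oscillation arguments of Sections~\ref{section-3.2} and~\ref{section-3.3} by arguments based on smoothness, via Lemma~\ref{lem-4.1}.

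For $(H_{r})$ with $r$ small, suppose by contradiction a non-negative $T$-periodic solution $u$ of \eqref{eq-2.5} with $\vartheta\in\mathopen{]}0,1\mathclose{]}$ and $\|u\|_{\infty}=r$. Choose $r_{0}>0$ so small that $g$ is $\mathcal{C}^{1}$ on $\mathopen{[}0,r_{0}\mathclose{]}$; the regularity together with $(g_{0})$ forces $g'(0)=0$. By $(g_{0})$, the ratio $g(s)/s$ is bounded near $0$, so $u$ satisfies a linear equation $u''+cu'+k(t)u=0$ with $k\in L^{1}_{T}$; by the strong maximum principle (ODE uniqueness at a non-negative minimum) either $u\equiv 0$ or $u>0$ on $\mathbb{R}$. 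Since $\|u\|_{\infty}=r>0$, we have $u(t)\in J:=\mathopen{]}0,r\mathclose{]}$ for all $t$, and $g$ is positive and $\mathcal{C}^{1}$ on $J$ with bounded derivative $\sup_{J}|g'|\to 0$ as $r\to 0^{+}$. Apply Lemma~\ref{lem-4.1} with $\nu=\vartheta\lambda\le\lambda$: since $\omega_{*}$ depends only on $a$, $c$, $T$, choosing $r_{0}$ even smaller if needed ensures $\lambda\sup_{J}|g'|<\omega_{*}$, which contradicts the existence of $u$. Hence $(H_{r})$ holds for all $r\in\mathopen{]}0,r_{0}\mathclose{]}$ and Lemma~\ref{lem-2.2-deg1} gives degree $1$ on $B(0,r)$.

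For $(H_{R})$ with $R$ large, assume a non-negative $T$-periodic solution $u$ of \eqref{eq-2.5} with $\|u\|_{\infty}=R$. Repeat verbatim the first part of Section~\ref{section-3.3} (which uses only $(g_{\infty})$, itself implied by $(g'_{\infty})$) to conclude that for $R$ large enough $u(t)\ge R/2$ on $\mathbb{R}$, so $u$ has range in $J:=\mathopen{[}R/2,+\infty\mathclose{[}$. By the smoothness hypothesis, for $R$ large enough $g$ is positive and $\mathcal{C}^{1}$ on $J$; by $(g'_{\infty})$, $\sup_{s\ge R/2}|g'(s)|\to 0$ as $R\to+\infty$. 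Lemma~\ref{lem-4.1} with $\nu=\vartheta\lambda\le\lambda$ then yields a contradiction as soon as $R\ge R_{0}$ with $R_{0}$ chosen (independently of $\vartheta$) so that $\lambda\sup_{J}|g'|<\omega_{*}$. Hence $(H_{R})$ holds and the degree equals $1$ on every such $B(0,R)$. Combining the three degree computations exactly as in \eqref{eq-2.6}--\eqref{eq-2.7} of Section~\ref{section-2.2} produces the two nontrivial solutions, whose strict positivity follows from the same strong maximum principle argument based on $(g_{0})$.

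The main technical obstacle is the applicability of Lemma~\ref{lem-4.1}: the universal smallness condition $\nu D<\omega_{*}$ there requires control of $|g'|$ on the entire range of $u$. At zero this is automatic once $g'(0)=0$; at infinity it is not automatic, because $(g'_{\infty})$ controls $|g'|$ only for large $s$, and the preliminary lower bound $u\ge R/2$ coming from Section~\ref{section-3.3} is exactly what is needed to confine the range to a region where $(g'_{\infty})$ bites. Establishing (and then invoking) that lower bound, rather than the change of variables \eqref{eq-4.2} itself, is the step whose correctness the reader should be most careful about.
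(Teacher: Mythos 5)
Your proposal is correct and takes essentially the same route as the paper's own proof of Theorem~\ref{th-4.1}: the degree computation on $\Omega_{\rho,I}$ is imported unchanged from Section~\ref{section-3.1}, and the two regular-oscillation arguments are replaced by applications of Lemma~\ref{lem-4.1}, near zero on $J=\mathopen{]}0,r\mathclose{]}$ (using $g'(0)=0$ forced by $(g_{0})$ and smoothness) and near infinity on the range confined by the bound $u\geq R/2$ from Section~\ref{section-3.3}, exactly as the paper does (your $J=\mathopen{[}R/2,+\infty\mathclose{[}$ versus the paper's $\mathopen{[}R_{n}/2,R_{n}\mathclose{]}$ is immaterial). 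The only cosmetic slip is your bound $g(s)\le g(1)+s\sup_{\mathopen{[}1,s\mathclose{]}}|g'|$, which presumes $g$ is $\mathcal{C}^{1}$ on $\mathopen{[}1,s\mathclose{]}$; since $g$ is only assumed smooth near $0$ and near infinity, one should take the base point inside the neighborhood of infinity and use plain continuity of $g$ on the compact remainder, a trivially fixable point.
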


\begin{proof}
We follow the scheme described in Section~\ref{section-2.2}.
The verification of the assumptions of Lemma~\ref{lem-2.1-deg0} for $\lambda$ large is exactly the same as in Section~\ref{section-3.1}.
We just describe the changes with respect to Section~\ref{section-3.2} and Section~\ref{section-3.3}.
It is important to emphasize that $\lambda > \lambda^{*}$ is fixed from now on.

\smallskip
\noindent
\textit{Verification of the assumption of Lemma~\ref{lem-2.2-deg1} for $r$ small. }
Let $\mathopen{[}0,\varepsilon_{0}\mathclose{[}$ be a right neighborhood of $0$ where $g$ is continuously differentiable.
We claim that there exists $r_{0}\in \mathopen{]}0,\varepsilon_{0}\mathclose{[}$
such that for all $0<r\leq r_{0}$ and
for all $\vartheta\in\mathopen{]}0,1\mathclose{]}$ there are no non-negative $T$-periodic solutions $u(t)$ of \eqref{eq-2.5}
such that $\|u\|_{\infty}=r$.

First of all, we observe that any non-negative $T$-periodic solutions $u(t)$ of \eqref{eq-2.5},
with $\|u\|_{\infty}=r$, is positive. This follows either by the uniqueness of
the trivial solution (due to the smoothness of $g(s)$ in $\mathopen{[}0,\varepsilon_{0}\mathclose{[}$), or by an elementary form
of the strong maximum principle. Thus we have to prove that there are no $T$-periodic solutions $u(t)$ of \eqref{eq-2.5}
with range in the interval $\mathopen{]}0,r\mathclose{]}$ (for all $0 < r \leq r_{0}$).

We apply Lemma~\ref{lem-4.1} to the present situation with $\nu =\vartheta \lambda$
and $J= \mathopen{]}0,r\mathclose{]}$. There exists a constant $\omega_{*} >0$
(independent on $r$) such that there are no
$T$-periodic solutions with range in $\mathopen{]}0,r\mathclose{]}$ if
\begin{equation*}
\sup_{0 <s \leq r}|g'(s)| = \max_{0\leq s \leq r}|g'(s)| < \dfrac{\omega_{*}}{\lambda}
\end{equation*}
(recall that $0 < \vartheta \leq 1$).
This latter condition is clearly satisfied for every $r\in \mathopen{]}0,r_{0}\mathclose{]}$,
with $r_{0} > 0$ suitably chosen
using the continuity of $g'(s)$ at $s=0^{+}$.

\smallskip
\noindent
\textit{Verification of the assumption of Lemma~\ref{lem-2.2-deg1} for $R$ large. }
Let $\mathopen{]}N,+\infty\mathclose{[}$ be a neighborhood of infinity where $g$ is continuously differentiable.
As in Section~\ref{section-3.3}, we argue by contradiction. Suppose that there exists a sequence of
non-negative $T$-periodic functions $u_{n}(t)$ satisfying \eqref{eq-3large} and such that
$\|u_{n}\|_{\infty} = R_{n}\to +\infty$. By the same argument as previously developed therein, we find
that $u_{n}(t) \geq R_{n}/2$, for all $t\in {\mathbb{R}}$ (for $n$ sufficiently large). Notice that for this part of the proof we
require condition $(g_{\infty})$, but we do not need the hypothesis of regular oscillation at infinity.
Clearly, $(g_{\infty})$ is implied by $(g'_{\infty})$.

For $n$ sufficiently large (such that $R_{n}>2 N$),
we apply Lemma~\ref{lem-4.1} to the present situation with $\nu = \nu_{n}:=\vartheta_{n} \lambda$
and $J= J_{n}:=\mathopen{[}R_{n}/2,R_{n}\mathclose{]}$. There exists a constant $\omega_{*} >0$
(independent on $n$) such that there are no
$T$-periodic solutions with range in $J_{n}$ if
\begin{equation*}
\max_{\frac{R_{n}}{2}\leq s \leq R_{n}}|g'(s)| < \dfrac{\omega_{*}}{\lambda}
\end{equation*}
(recall that $0 < \vartheta_{n} \leq 1$).
This latter condition is clearly satisfied for every $n$ sufficiently large
as a consequence of condition $(g'_{\infty})$. The desired contradiction is thus achieved.
\end{proof}

\begin{remark}\label{rem-4.1}
Clearly one can easily produce two further theorems, by combining the assumptions of regular oscillation at zero
(at infinity) with the smoothness condition at infinity (at zero, respectively).
$\hfill\lhd$
\end{remark}

\subsection{Nonexistence results}\label{section-4.3}

In the proof of Theorem~\ref{th-4.1} we have applied Lemma~\ref{lem-4.1} to intervals of the form $\mathopen{]}0,r\mathclose{]}$
or, respectively, $\mathopen{[}R_{n}/2,R_{n}\mathclose{]}$ in order to check the assumptions of Lemma~\ref{lem-2.2-deg1}. Clearly,
one could apply such a lemma to the whole interval $\mathbb{R}^{+}_{0}$ of positive real numbers. In this manner,
we can easily provide a nonexistence result of positive $T$-periodic solutions to \eqref{eq-1.1} when $g'(s)$ is bounded in
$\mathbb{R}^{+}_{0}$ and $\lambda$ is small. With this respect, the following result holds.

\begin{theorem}\label{th-4.2}
Let $g \colon {\mathbb{R}}^{+} \to {\mathbb{R}}^{+}$ be a  continuously differentiable function
satisfying $(g_{*})$, $(g_{0})$ and $(g'_{\infty})$.
Let $a \in L^{1}_{T}$ satisfy $(a_{*})$.
Then there exists $\lambda_{*}>0$ such that for each $0 < \lambda < \lambda_{*}$
equation \eqref{eq-1.1} has no positive $T$-periodic solution.
\end{theorem}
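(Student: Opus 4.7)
My approach is to deduce Theorem~\ref{th-4.2} from a single application of Lemma~\ref{lem-4.1} with $J := \mathbb{R}^{+}_{0}$ and $\nu := \lambda$. Under these choices, equation~\eqref{eq-4.1} coincides with equation~\eqref{eq-1.1}, and a $T$-periodic solution $u(t)$ has range contained in $J = \mathopen{]}0,+\infty\mathclose{[}$ if and only if it is a positive $T$-periodic solution. Note that the codomain hypothesis of Lemma~\ref{lem-4.1} is automatic: by $(g_{*})$, $g$ sends $\mathbb{R}^{+}_{0}$ into $\mathbb{R}^{+}_{0}$. Lemma~\ref{lem-4.1} therefore yields the desired nonexistence as soon as we guarantee
\[
\lambda \, \sup_{s > 0} |g'(s)| < \omega_{*},
\]
where $\omega_{*}>0$ is the constant produced by the lemma (depending only on $c$, $T$, $a$, and not on $g$ or $\lambda$).

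The heart of the argument is therefore to show that $D := \sup_{s > 0} |g'(s)|$ is finite. I would split $\mathopen{]}0,+\infty\mathclose{[}$ into three regions. Near zero, since $g\in C^{1}(\mathbb{R}^{+})$ with $g(0)=0$, assumption $(g_{0})$ upgrades to
\[
g'(0) \,=\, \lim_{s \to 0^{+}} \frac{g(s)-g(0)}{s} \,=\, \lim_{s \to 0^{+}} \frac{g(s)}{s} \,=\, 0,
\]
so by continuity of $g'$ at $0$ there exists $\delta>0$ with $g'$ bounded on $\mathopen{[}0,\delta\mathclose{]}$. At infinity, $(g'_{\infty})$ gives some $M>\delta$ with $g'$ bounded on $\mathopen{[}M,+\infty\mathclose{[}$. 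On the remaining compact interval $\mathopen{[}\delta,M\mathclose{]}$, continuity of $g'$ finishes the job. Hence $D<+\infty$.

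Finally I would set $\lambda_{*} := \omega_{*}/D$; this is well defined and positive, because $D>0$ (otherwise $g'\equiv 0$ on $\mathbb{R}^{+}_{0}$ together with $g(0)=0$ would force $g\equiv 0$, contradicting $(g_{*})$). For every $\lambda \in \mathopen{]}0,\lambda_{*}\mathclose{[}$ we have $\lambda D < \omega_{*}$, so Lemma~\ref{lem-4.1} rules out any $T$-periodic solution of \eqref{eq-1.1} with range in $\mathbb{R}^{+}_{0}$, that is, any positive $T$-periodic solution. I do not expect a serious obstacle: the only subtle point is the passage from $(g_{0})$ to $g'(0)=0$, which hinges in an essential way on both the $C^{1}$ regularity of $g$ at the origin and on $g(0)=0$ coming from $(g_{*})$. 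Everything else is a direct invocation of Lemma~\ref{lem-4.1}.
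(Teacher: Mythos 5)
Your proposal is correct and follows essentially the same route as the paper: apply Lemma~\ref{lem-4.1} with $J=\mathbb{R}^{+}_{0}$ and $\nu=\lambda$, observing that $g'$ is globally bounded because $(g_{0})$ together with $C^{1}$ regularity and $g(0)=0$ gives $g'(0)=0$ while $(g'_{\infty})$ controls $g'$ at infinity, and then take $\lambda_{*}=\omega_{*}/D$ with $D=\sup_{s>0}|g'(s)|$. Your explicit remarks that $D>0$ (else $g\equiv 0$, contradicting $(g_{*})$) and that $g$ maps $\mathbb{R}^{+}_{0}$ into $\mathbb{R}^{+}_{0}$ are minor details the paper leaves implicit, but the argument is the same.
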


\begin{proof}
First of all, we observe that $g'$ is bounded on $\mathbb{R}^{+}_{0}$
(since $g(s)$ is continuously differentiable in $\mathbb{R}^{+}$ with $g'(0) = g'(\infty) = 0$). Accordingly, let us set
\begin{equation*}
D:= \max_{s\geq 0} |g'(s)|.
\end{equation*}
We apply now Lemma~\ref{lem-4.1} to equation \eqref{eq-1.1} for
$J=\mathbb{R}^{+}_{0}$. This lemma guarantees the existence of a constant $\omega_{*} > 0$ such that, if
$0 < \lambda < \omega_{*}/D$, \eqref{eq-1.1} has no positive $T$-periodic solution. This ensures the existence of a
suitable constant $\lambda_{*} \geq \omega_{*}/D$, as claimed in the statement of the theorem.
\end{proof}

At this point, Theorem~\ref{th-1.2} of the Introduction is a straightforward consequence of Theorem~\ref{th-4.1} and Theorem~\ref{th-4.2}.

\section{Neumann boundary conditions}\label{section-5}

In this final section we briefly describe how to obtain the preceding results for the Neumann boundary value problem.
For the sake of simplicity, we deal with the case $c=0$. If $c\neq0$, we can write equation \eqref{eq-1.1} as
\begin{equation*}
\bigl{(}u' e^{ct} \bigr{)}' + \lambda \tilde{a}(t)g(u) = 0, \quad
\text{ with }\; \tilde{a}(t):= a(t)e^{ct},
\end{equation*}
and enter in the setting of coincidence degree theory for the linear operator $L \colon u\mapsto - (u' e^{ct})'$.
Accordingly, we consider the BVP
\begin{equation}\label{NeumannBVP}
\begin{cases}
\, u'' + \lambda a(t) g(u) = 0 \\
\, u'(0)=u'(T)=0,
\end{cases}
\end{equation}
where $a \colon \mathopen{[}0,T\mathclose{]} \to \mathbb{R}$ and $g(s)$ satisfy the same conditions as in the previous sections.
In this case, the abstract setting of Section~\ref{section-2} can be reproduced almost verbatim with $X:= \mathcal{C}(\mathopen{[}0,T\mathclose{]})$,
$Z:=L^{1}(\mathopen{[}0,T\mathclose{]})$ and $L \colon u\mapsto - u''$, by taking
\begin{equation*}
\text{\rm dom}\,L:= \bigl{\{}u\in W^{2,1}(\mathopen{[}0,T\mathclose{]}) \colon u'(0) = u'(T) = 0 \bigr{\}}.
\end{equation*}
With the above positions $\ker L \cong {\mathbb{R}}$, $\text{\rm Im}\,L$, as well as the projectors $P$
and $Q$ are exactly the same as in Section~\ref{section-2}. All the results till Section~\ref{section-4} can be now
restated for problem \eqref{NeumannBVP}. In particular, we obtain again Theorem~\ref{th-1.1}, Theorem~\ref{th-4.1} and
Theorem~\ref{th-4.2}, as well as their corollaries for equation \eqref{eq-1.1} (with $c=0$) and the Neumann boundary conditions.

We present now a consequence of these results to the study of a PDE in an annular domain.
In order to simplify the exposition of the next results, we assume the continuity of the weight function.
In this manner, the solutions we find are the ``classical'' ones (at least two times continuously differentiable).

\subsection{Radially symmetric solutions}\label{section-5.1}

Let $\|\cdot\|$ be the Euclidean norm in ${\mathbb{R}}^{N}$ (for $N \geq 2$) and let
\begin{equation*}
\Omega:= B(0,R_{2})\setminus B[0,R_{1}] = \bigl{\{}x\in {\mathbb{R}}^{N} \colon R_{1} < \|x\| < R_{2}\bigr{\}}
\end{equation*}
be an open annular domain, with $0 < R_{1} < R_{2}$.

We deal with the Neumann boundary value problem
\begin{equation}\label{eq-pde-rad}
\begin{cases}
\, -\Delta \,u = \lambda \, q(x)\,g(u) & \text{ in } \Omega \\ \vspace*{2pt}
\, \dfrac{\partial u}{\partial {\bf n}} = 0 & \text{ on } \partial\Omega,
\end{cases}
\end{equation}
where $q \colon \overline{\Omega}\to {\mathbb{R}}$ is a continuous function which is radially symmetric, namely
there exists a continuous scalar function ${\mathcal{Q}} \colon \mathopen{[}R_{1},R_{2}\mathclose{]}\to {\mathbb{R}}$
such that
\begin{equation*}
q(x) = {\mathcal{Q}}(\|x\|), \quad \forall \, x\in \overline{\Omega}.
\end{equation*}
We look for existence/nonexistence and multiplicity of radially symmetric positive solutions of \eqref{eq-pde-rad},
that are classical solutions such that $u(x) > 0$ for all $x\in \Omega$ and also $u(x) = {\mathcal{U}}(\|x\|)$,
where ${\mathcal{U}}$ is a scalar function defined on $\mathopen{[}R_{1},R_{2}\mathclose{]}$.

Accordingly, our study can be reduced to the search of positive solutions
of the Neumann boundary value problem
\begin{equation}\label{eq-rad}
{\mathcal{U}}''(r) + \dfrac{N-1}{r} \, {\mathcal{U}}'(r) + \lambda {\mathcal{Q}}(r) g({\mathcal{U}}(r)) = 0,
\quad {\mathcal{U}}'(R_{1}) = {\mathcal{U}}'(R_{2}) = 0.
\end{equation}
Using the standard change of variable
\begin{equation*}
t = h(r):= \int_{R_{1}}^{r} \xi^{1-N} ~\!d\xi
\end{equation*}
and defining
\begin{equation*}
T:= \int_{R_{1}}^{R_{2}} \xi^{1-N} ~\!d\xi, \quad r(t):= h^{-1}(t) \quad \text{and} \quad v(t)={\mathcal{U}}(r(t)),
\end{equation*}
we transform \eqref{eq-rad} into the equivalent problem
\begin{equation}\label{eq-rad1}
v'' +  \lambda a(t) g(v) = 0, \quad v'(0) = v'(T) = 0,
\end{equation}
with
\begin{equation*}
a(t):= r(t)^{2(N-1)}{\mathcal{Q}}(r(t)).
\end{equation*}
Consequently, the Neumann boundary value problem \eqref{eq-rad1} is of the same form of \eqref{NeumannBVP}
and we can apply the previous results.

Notice that condition $(a_{*})$ reads as
\begin{equation*}
0 > \int_{0}^{T}r(t)^{2(N-1)}{\mathcal{Q}}(r(t))~\!dt = \int_{R_{1}}^{R_{2}}r^{N-1}{\mathcal{Q}}(r)~\!dr.
\end{equation*}
Up to a multiplicative constant, the latter integral is the integral of $q(x)$ on $\Omega$,
using the change of variable formula for radially symmetric functions. Thus, $a(t)$ satisfies $(a_{*})$ if and only if
\begin{equation*}
\int_{\Omega}^{} q(x)~\!dx < 0.
\leqno{(q_{*})}
\end{equation*}

The analogue of Theorem~\ref{th-1.1} for problem \eqref{eq-pde-rad} now becomes the following.

\begin{theorem}\label{th-5.1}
Let $g \colon {\mathbb{R}}^{+} \to {\mathbb{R}}^{+}$ be a continuous function satisfying $(g_{*})$.
Suppose also that $g$ is regularly oscillating at zero and at infinity and satisfies $(g_{0})$ and $(g_{\infty})$.
Let $q(x)$ be a continuous (radial) weight function as above satisfying $(q_{*})$ and such that $q(x_{0}) > 0$ for some $x_{0}\in \Omega$.
Then there exists $\lambda^{*}>0$ such that for each $\lambda > \lambda^{*}$ problem \eqref{eq-pde-rad} has at least two
positive radially symmetric solutions.
\end{theorem}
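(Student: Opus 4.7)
The plan is to reduce problem \eqref{eq-pde-rad} to the one-dimensional Neumann problem \eqref{eq-rad1}, which the excerpt has already carried out via the change of variable $t = h(r)$, and then invoke the Neumann analogue of Theorem~\ref{th-1.1} asserted at the start of Section~\ref{section-5}. Radial solutions of \eqref{eq-pde-rad} correspond one-to-one with classical solutions of \eqref{eq-rad}, and this in turn transforms into \eqref{eq-rad1} with the weight $a(t) := r(t)^{2(N-1)} \mathcal{Q}(r(t))$, which is continuous and $T$-periodically extendable. Positivity is preserved in both directions since $r(t)^{2(N-1)} > 0$ and $h$ is a $C^{1}$-diffeomorphism of $\mathopen{[}R_{1},R_{2}\mathclose{]}$ onto $\mathopen{[}0,T\mathclose{]}$; moreover, the Neumann conditions $\partial u/\partial {\bf n}=0$ on $\partial\Omega$ match exactly $v'(0)=v'(T)=0$.

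Next I will verify the hypotheses of the Neumann version of Theorem~\ref{th-1.1} for \eqref{eq-rad1}. The assumptions on $g$ are inherited unchanged. The sign condition $(a_{*})$ for $a(t)$ is equivalent to $(q_{*})$, thanks to the computation already made in the excerpt, namely
\begin{equation*}
\int_{0}^{T} a(t)\,dt = \int_{R_{1}}^{R_{2}} r^{N-1}\mathcal{Q}(r)\,dr = \frac{1}{\omega_{N-1}} \int_{\Omega} q(x)\,dx,
\end{equation*}
where $\omega_{N-1}$ is the surface measure of the unit sphere. The remaining ingredient is the existence of an interval $I \subseteq \mathopen{[}0,T\mathclose{]}$ on which $a\geq 0$ and $\int_{I} a(t)\,dt > 0$. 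For this I use the assumption $q(x_{0})>0$ for some $x_{0}\in \Omega$: letting $r_{0}:=\|x_{0}\| \in \mathopen{]}R_{1},R_{2}\mathclose{[}$, continuity of $\mathcal{Q}$ yields $\delta > 0$ such that $\mathcal{Q}(r) > 0$ on $\mathopen{[}r_{0}-\delta,r_{0}+\delta\mathclose{]}\subseteq \mathopen{[}R_{1},R_{2}\mathclose{]}$, whence $I := h\bigl(\mathopen{[}r_{0}-\delta,r_{0}+\delta\mathclose{]}\bigr)$ does the job because $r(t)^{2(N-1)}>0$ throughout.

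Having verified all hypotheses, applying the Neumann version of Theorem~\ref{th-1.1} gives $\lambda^{*}>0$ such that for every $\lambda > \lambda^{*}$ problem \eqref{eq-rad1} admits at least two positive solutions $v_{1},v_{2}\in \mathcal{C}^{2}(\mathopen{[}0,T\mathclose{]})$. Setting $\mathcal{U}_{i}(r):= v_{i}(h(r))$ for $i=1,2$, the chain rule shows that each $\mathcal{U}_{i}$ solves \eqref{eq-rad} in the classical sense, and then $u_{i}(x):=\mathcal{U}_{i}(\|x\|)$ is a positive radial classical solution of \eqref{eq-pde-rad}. The two solutions are distinct because their traces $v_{1}, v_{2}$ are.

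The proof is essentially book-keeping rather than substantively new: the main non-routine step is the production of the interval $I$, which is elementary by continuity. The only points that deserve care are that the change of variables $h$ genuinely converts \eqref{eq-rad} into \eqref{eq-rad1} (so that Neumann ODE solutions lift to classical PDE solutions with $\mathcal{C}^{2}$ regularity up to $\partial\Omega$), and that the Neumann counterpart of Theorem~\ref{th-1.1} indeed holds, as asserted at the beginning of Section~\ref{section-5}; both are standard.
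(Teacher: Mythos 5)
Your proposal is correct and follows essentially the same route as the paper: reduce \eqref{eq-pde-rad} to the Neumann problem \eqref{eq-rad1} via the change of variable $t=h(r)$, observe that $(a_{*})$ for $a(t)=r(t)^{2(N-1)}\mathcal{Q}(r(t))$ is equivalent to $(q_{*})$, and apply the Neumann analogue of Theorem~\ref{th-1.1} stated at the start of Section~\ref{section-5}. Your explicit construction of the interval $I$ from $q(x_{0})>0$ by continuity is exactly the (implicit) verification the paper relies on.
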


Similarly, if we replace the regularly oscillating conditions with the smoothness assumptions, from Theorem~\ref{th-4.1} and Theorem~\ref{th-4.2},
we obtain the next result.

\begin{theorem}\label{th-5.2}
Let $g \colon {\mathbb{R}}^{+} \to {\mathbb{R}}^{+}$ be a continuously differentiable function satisfying $(g_{*})$, $(g_{0})$ and
$(g'_{\infty})$.
Let $q(x)$ be a continuous (radial) weight function as above satisfying $(q_{*})$ and such that $q(x_{0}) > 0$ for some $x_{0}\in \Omega$.
Then there exist two positive constant $\lambda_{*} \leq \lambda^{*}$ such that for each $0 < \lambda < \lambda_{*}$
there are no positive radially symmetric solutions for problem \eqref{eq-pde-rad}, while for
each $\lambda > \lambda^{*}$ there exist  at least two positive radially symmetric solutions.
Moreover, if $g'(s) > 0$ for all $s > 0$, then condition $(q_{*})$ is also necessary.
\end{theorem}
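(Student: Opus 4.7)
My plan is to view Theorem~\ref{th-5.2} as a direct consequence of Theorem~\ref{th-4.1} and Theorem~\ref{th-4.2}, transferred through the radial change of variables already set up in Section~\ref{section-5.1}. Specifically, I would first observe that a radially symmetric classical solution $u(x) = \mathcal{U}(\|x\|)$ of \eqref{eq-pde-rad} corresponds, via $t = h(r) = \int_{R_{1}}^{r}\xi^{1-N}d\xi$ and $v(t) = \mathcal{U}(r(t))$, to a classical solution of the Neumann problem \eqref{eq-rad1} with $a(t) := r(t)^{2(N-1)}\mathcal{Q}(r(t))$ on $\mathopen{[}0,T\mathclose{]}$. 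Since $r(t)$ is smooth and positive on $\mathopen{[}0,T\mathclose{]}$, positivity of $\mathcal{U}$ is equivalent to positivity of $v$, and the transformation preserves the sign of the weight pointwise (in particular it preserves strict inequalities and the sign of integrals up to a positive multiplicative factor).

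Next I would verify the hypotheses of the Neumann analogues of Theorem~\ref{th-4.1} and Theorem~\ref{th-4.2} for the reduced equation. The continuity of $q$ on $\overline{\Omega}$ gives $a \in \mathcal{C}(\mathopen{[}0,T\mathclose{]}) \subseteq L^{1}(\mathopen{[}0,T\mathclose{]})$. Condition $(q_{*})$ is, by the radial change of variables in the volume integral,
\begin{equation*}
\int_{\Omega} q(x)\,dx = \omega_{N-1}\int_{R_{1}}^{R_{2}} r^{N-1}\mathcal{Q}(r)\,dr
= \omega_{N-1}\int_{0}^{T} a(t)\,dt < 0,
\end{equation*}
so $(a_{*})$ holds. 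The assumption $q(x_{0}) > 0$ for some $x_{0}\in\Omega$ gives, by continuity, a small ball around $x_{0}$ on which $q > 0$; translating back, this produces a nondegenerate subinterval $I \subseteq \mathopen{[}0,T\mathclose{]}$ with $a(t) \geq 0$ on $I$ and $\int_{I} a(t)\,dt > 0$. With these ingredients, the Neumann version of Theorem~\ref{th-4.1} yields $\lambda^{*} > 0$ such that two positive solutions of \eqref{eq-rad1} exist for $\lambda > \lambda^{*}$, and Theorem~\ref{th-4.2} (which needs only $a\in L^{1}_{T}$ and $(a_{*})$) produces $\lambda_{*} > 0$ such that no positive solution exists for $0 < \lambda < \lambda_{*}$. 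Taking $\lambda_{*}$ smaller if necessary, we may assume $\lambda_{*} \leq \lambda^{*}$.

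For the necessity of $(q_{*})$ under $g'(s) > 0$ for $s > 0$, I would adapt the argument already cited in the paper from \cite[Proposition~2.1]{BoZa-12}: if $v$ is a positive Neumann solution of \eqref{eq-rad1}, dividing the equation by $g(v(t)) > 0$ and integrating by parts on $\mathopen{[}0,T\mathclose{]}$, the Neumann boundary conditions annihilate the boundary term, giving
\begin{equation*}
\int_{0}^{T} \frac{g'(v(t))(v'(t))^{2}}{g(v(t))^{2}}\,dt + \lambda \int_{0}^{T} a(t)\,dt = 0.
\end{equation*}
Since $g'(v(t))/g(v(t))^{2} \geq 0$ and the first integral is $\geq 0$, we conclude $\int_{0}^{T} a(t)\,dt \leq 0$; strict inequality follows because either $v$ is nonconstant (making the first integral strictly positive, as $g' > 0$) or $v \equiv v_{0} > 0$ constant, in which case the equation forces $a(t)g(v_{0}) \equiv 0$ and therefore $a \equiv 0$, contradicting $q(x_{0}) > 0$. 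Translating back, this gives $\int_{\Omega} q(x)\,dx < 0$, i.e.\ $(q_{*})$.

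The bulk of this proof is bookkeeping: the only point requiring genuine care is checking that the Neumann versions of Theorem~\ref{th-4.1} and Theorem~\ref{th-4.2}, whose statements are asserted in the paragraph just before Section~\ref{section-5.1} but whose proofs are not reproduced, genuinely apply. Concretely, one must make sure that the coincidence-degree machinery of Section~\ref{section-2} and the auxiliary Lemmas~\ref{lem-2.1-deg0}, \ref{lem-2.2-deg1}, and \ref{lem-4.1} survive the change of $\mathcal{C}_{T}$ for $\mathcal{C}(\mathopen{[}0,T\mathclose{]})$ and of the periodic $\text{\rm dom}\,L$ for the Neumann one (the paper claims this is verbatim, which is reasonable since the kernel, image, and projectors are unchanged). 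This is the only step where something could conceivably go wrong, and it is essentially the same step the authors invoke; once granted, Theorem~\ref{th-5.2} follows by the transfer argument above.
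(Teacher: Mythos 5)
Your proposal is correct and follows essentially the same route as the paper: reduce \eqref{eq-pde-rad} via the radial change of variables of Section~\ref{section-5.1} to the Neumann problem \eqref{eq-rad1}, note that $(q_{*})$ and $q(x_{0})>0$ translate into $(a_{*})$ and the required interval $I$, and invoke the Neumann analogues of Theorem~\ref{th-4.1} and Theorem~\ref{th-4.2}, with the necessity of $(q_{*})$ obtained by the standard division-by-$g(v)$ argument cited from \cite{BoZa-12}. No gaps beyond the verbatim adaptation of the abstract setting that the paper itself asserts.
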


\bibliographystyle{elsart-num-sort}
\bibliography{BFZ_biblio_2015}

\bigskip
\begin{flushleft}

{\small{\it Preprint}}

{\small{\it February 2015}}

\end{flushleft}

\end{document}